\newcommand{\Ad}{Ad}
\newcommand{\id}{id}
\newcommand{\Hom}{Hom}
\newcommand{\End}{End}
\newcommand{\ad}{ad}
\newcommand{\I}{Im}
\newcommand{\R}{Re}
\newtheorem*{helena}{Proposition}
\newtheorem*{main}{Main Theorem}
\newtheorem{theorem}{Theorem}
\newtheorem{proposition}[theorem]{Proposition}
\newtheorem{lemma}[theorem]{Lemma}
\newtheorem{corollary}[theorem]{Corollary}
\newtheorem{claim}{Claim}
\newtheorem{remark}{Remark}
\newtheorem{definition}{Definition}
\begin{document}
	\title{Holonomy and Equivalence of Analytic Foliations
		\thanks{I am deeply grateful to my thesis advisor Prof. D. Panazzolo who proposed this problem and made crucial contributions in many results in this paper. Without his participation, this paper would not be done. Also, I thank Prof R. Targino for the suggestions to improve the quality of the text. The author is glad to be supported by the Université de Haute-Alsace (UHA), where my Phd training has been well assisted by the IRIMAS laboratory.}}
	
	\author{Francisco Chaves}
	\maketitle
	
	\begin{abstract}
		The main goal of this paper is the analytic classification of the germs of singular foliations generated, up to an analytic change of coordinates, by the germs of vector fields of form the $x\partial_x+\sum_{i=1}^{n}a_i(x,\mathbf{z})\partial_{z_i}$, where $a_i(x,\mathbf{z})$ is a germ of analytic function with $a_i(x,0)=0$. We prove, under some hypothesis, that these germs of singular foliations are analytically classified once their local holonomy along a given separatrix are analytically conjugated.
		
	\end{abstract}
	
	\section{Introduction}\label{intro}
	The analytic classification of singular analytic foliations in dimension two and its connection with the analytic conjugation of the corresponding holonomies was one of the central results of the well-known paper of Mattei and Moussu \cite{ASENS_1980_4_13_4_469_0} in 1980.
	
	Later in 1984, Elizarov and Il'Yashenko \cite{elizarov1984remarks} proved that, in dimension three, if we add some restrictions on the vector field that generates the foliations, the analytic conjugation of the holonomies corresponds to the analytic equivalence of the foliations. In the year 2006, Helena Reis \cite{reis2006equivalence} reproved a result of the same type, but for higher dimensions.
	
	In more details, the authors consider germs of singular analytic vector fields $X$ in $(\mathbb C ^{n},0)$ for $n\geq 3$, with $\lambda_1,\dots,\lambda_n$ as the eigenvalues of the linear part of $X$, verifying:
	\begin{enumerate} 
		\item The origin of $\mathbb C ^{n}$ is an isolated singularity of $X$.
		\item $X$ is of Siegel type (i.e, the convex hull of the eigenvalues of its linear part contains the origin).
		\item All the eigenvalues of the linear part of $X$ are nonzero and there exists a straight line through the origin of $\mathbb C$ separating $\lambda_1$ from the others eigenvalues in the complex plane.
		\item Up to a change of coordinates, $X=\sum_{i=1}^{n}\lambda_iz_i(1+f_i(z))\partial _{z_i}$, where $z=(z_1,\dots,z_n)$, and $f_i$ is a germ of analytic function such that $f_i(0)=0$ for all $i$.
	\end{enumerate}
	In \cite{elizarov1984remarks} and \cite{reis2006equivalence}, it is proved the following.
	
	\begin{helena}[\cite{reis2006equivalence}, Theorem 1]
		Let $X$ and $Y$ be two germ vector fields, verifying $(1), (2), (3)$ and $(4)$. Denote by $h_X$ and $h_Y$ the holonomies of $X$ and $Y$ relatively to the separatrices of $X$ and $Y$ tangent to the eigenspace associated with the first eigenvalue, respectively. Then, if $h_X$ and $h_Y$ are analytically conjugated, $X$ and $Y$ are analytically equivalent.
	\end{helena}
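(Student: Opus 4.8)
The plan is to rebuild the ambient equivalence from the transversal conjugacy of the holonomies by propagating it along the leaves and then killing the resulting singularity with a removable-singularity theorem. I would first fix the geometry imposed by hypothesis $(4)$: the $z_1$-axis $S_X = \{z_2 = \cdots = z_n = 0\}$ is the separatrix tangent to the first eigenspace, each coordinate hyperplane is invariant, and $H_X = \{z_1 = 0\}$ is an invariant hypersurface through the singularity. Fixing a transversal $\Sigma_X = \{z_1 = \varepsilon\}$ with coordinates $z' = (z_2, \dots, z_n)$, I record the holonomy $h_X \in \mathrm{Diff}(\mathbb{C}^{n-1}, 0)$ and do likewise for $Y$, so that the hypothesis furnishes a germ of biholomorphism $\varphi \colon (\Sigma_X, 0) \to (\Sigma_Y, 0)$ with $\varphi \circ h_X = h_Y \circ \varphi$.

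The first analytic step is to note that away from $H_X$ the foliation is a suspension. Since $dz_1(X) = \lambda_1 z_1(1 + f_1)$ is nonzero for $0 < |z_1| < \delta$, the field $X$ is transverse there to the fibration $\pi \colon z \mapsto z_1$, so over the punctured disc the foliation is the suspension of $h_X$. I would then use the relation $\varphi \circ h_X = h_Y \circ \varphi$ to check that leafwise transport is single-valued, which produces a biholomorphism $\Phi$ from a neighbourhood of $S_X \setminus \{0\}$ onto a neighbourhood of $S_Y \setminus \{0\}$ carrying leaves of $X$ to leaves of $Y$ and restricting to $\varphi$ on the transversal.

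To reach a genuine neighbourhood of the origin I would saturate $\Phi$ along the leaves. Here hypothesis $(3)$ enters: after multiplying $X$ by a suitable constant, which does not change the foliation, the separating line becomes the imaginary axis and one may assume $\mathrm{Re}(\lambda_1) > 0 > \mathrm{Re}(\lambda_j)$ for $j \geq 2$. The real flow of $X$ then expands the $z_1$-direction and contracts the $z'$-directions, so every leaf not contained in $H_X$ is carried forward into the domain where $\Phi$ is already defined. Analytically continuing $\Phi$ along these leaves extends it holomorphically to $U_X \setminus H_X$, the single-valuedness of the continuation reducing once more to the holonomy relation. Finally, $\Phi$ is a locally bounded holomorphic map on the complement of the analytic hypersurface $H_X$, so by Riemann's removable-singularity theorem it extends holomorphically across $H_X$ — and in particular across the origin, whose codimension is $n \geq 3$ — to a biholomorphism of a full neighbourhood of $0$ conjugating the two foliations; this is the required analytic equivalence.

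I expect the principal difficulty to be the passage from a neighbourhood of the separatrix to a neighbourhood of the origin, and specifically the uniform control of $\Phi$ as one approaches $H_X$. One must choose the domains so that leaves genuinely return, keep the leafwise continuation single-valued, and, most delicately, prove that $\Phi$ stays bounded near $\{z_1 = 0\}$ — where the suspension structure degenerates and the flow time to reach the transversal blows up — in order to invoke the extension theorem. It is exactly at this point that the Siegel condition $(2)$, the separation of $\lambda_1$ in $(3)$, and the isolatedness of the singularity $(1)$ are indispensable, the latter guaranteeing that no other singular point obstructs the flow in the neighbourhood.
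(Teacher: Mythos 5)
First, a point of reference: the paper does not prove this Proposition at all --- it is quoted verbatim from \cite{elizarov1984remarks} and \cite{reis2006equivalence} --- so the only argument available here for comparison is the paper's proof of its Main Theorem, of which this statement is the motivating special case. Your plan is the classical dynamical route going back to Mattei--Moussu \cite{ASENS_1980_4_13_4_469_0}: realise the foliation off the invariant hypersurface as a suspension of the holonomy, propagate the transversal conjugacy $\varphi$ along leaves, saturate by the real flow after rotating so that $\mathrm{Re}(\lambda_1)>0>\mathrm{Re}(\lambda_j)$, and remove the singularity along $H_X$ by Riemann's theorem. The paper shares only your first stage: Proposition \ref{31} performs the same path-lifting, but deliberately only over an annulus $r<|x|<R$, producing an $x$-normalized conjugacy $\Psi\in\mathcal A_{norm}(\mathcal O_{r,R}\{\mathbf{z}\})$. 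Everything after that is replaced by algebra: normal forms (Proposition \ref{12}) reduce the problem to a symmetry $\Psi_0$ of $x\partial_x+L(\mu)$; the exponential decomposition $\Psi_0=A\circ\exp W$ together with Lemma \ref{19} and Proposition \ref{20} converts the symmetry into the commutation $[W,x\partial_x+L(\mu)]=0$; Theorem \ref{theo1} then shows the Laurent coefficients of $W$ carry no negative powers of $x$ (this is where the resonance hypothesis acts); and Lemma \ref{22} upgrades this formal extension across $H$ to convergence on a full neighbourhood of the origin. The entire purpose of that detour is to avoid the step you yourself flag as the most delicate one.

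And that step is, as written, a genuine gap: you assert the local boundedness of $\Phi$ near $\{z_1=0\}$ and remark that the hypotheses are ``indispensable'' there, but this estimate \emph{is} the theorem, not a verification. Concretely, a point with $|z_1|=\eta$ reaches $\Sigma_X$ after time about $\mathrm{Re}(\lambda_1)^{-1}\log(\varepsilon/\eta)$, its transverse part contracting like $\eta^{a_j}$ with $a_j=-\mathrm{Re}(\lambda_j)/\mathrm{Re}(\lambda_1)$, while flowing back along $Y$ re-expands like $\eta^{-b_j}$ with $b_j=-\mathrm{Re}(\mu_j)/\mathrm{Re}(\mu_1)$. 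The Proposition, unlike hypothesis \ref{conda} of the Main Theorem, does not assume conjugated linear parts, and conjugacy of the holonomies pins the ratios $\lambda_j/\lambda_1$ only modulo $\mathbb Z$ (through the multipliers $e^{2\pi i\lambda_j/\lambda_1}$), so the naive comparison of rates need not close. The integer discrepancies that defeat it are exactly the transverse negative resonances (elements $\mu_j+q$, $q\in\mathbb Z_{\geq1}$, of the positive cone) that the paper excludes via hypothesis \ref{condb}, and they genuinely occur under the Siegel-plus-separation hypotheses $(2)$--$(3)$: the eigenvalues $(1,-3,-1)$ are of Siegel type with a separating line, yet $-1=-3+2$. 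So a proof under $(1)$--$(4)$ must exploit the hypotheses in a finer, quantitative way (as the original papers do), and your sketch does not supply that argument. Two secondary points: the single-valuedness of the leafwise continuation needs a reason why the monodromy of a leaf in $U_X\setminus H_X$ is generated by the loop around $H_X$, hence controlled by the holonomy relation; and the aside about the origin ``whose codimension is $n\geq3$'' is idle --- once Riemann's theorem carries you across the codimension-one hypersurface $H_X$, the origin comes with it, and no Hartogs-type extension can substitute for the boundedness across $H_X$ itself.
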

	
	In this paper, we drop the hypothesis $(1)$, and weaken $(2),(3)$, and $(4)$. As consequence, we enlarge the set of vector fields for which the conclusion of the theorem holds.
	
	More precisely, we treat the class of germs of singular analytic foliations called crossing type. A \textit{crossing type}\label{crossing} foliation in $(\mathbb C ^{n+1},0)$ is a triple $(\mathcal F , H, \Gamma)$ such that:
	\begin{enumerate}[label=\roman*.]
		\item $\mathcal F$ is a germ of 1-dimensional analytic foliation.
		\item $H$ is a smooth hyper-surface and $\Gamma$ is a smooth invariant curve such that:
		\begin{enumerate}
			\item $H$ and $\Gamma$ are transverse at the origin.
			\item\label{2.b}Both are invariant by the foliation $\mathcal F$.
		\end{enumerate}
		\item\label{3.b} Each local generator of $\mathcal F$ has a nonzero eigenvalue in the $\Gamma$-direction.
	\end{enumerate}
	\begin{figure}[ht]
		\centering
		\includegraphics[scale=.8]{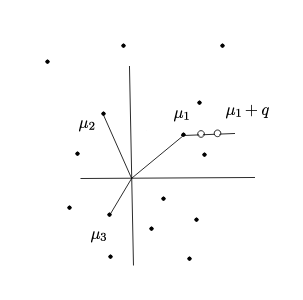}
		\caption{Transversality of $H$ and $\Gamma$  at origin}
	\end{figure}
	
	As in the papers cited above, we have to demand a property on the eigenvalues of the linear part of the local generators. We say that a vector field, with $1,\mu_1\dots,\mu_n$ as the eigenvalues of its linear part, has \textit{no transverse negative resonance}\label{reso} if no element in the positive cone $\mathcal C=\{\sum_{i=1}^{n} p_i\mu_i; p_1+\dots+p_n\geq1\}$, where $p_i\in\mathbb Z_{\geq0}$, can be written in the form $\mu_j+q$, with $q\in \mathbb Z_{\geq1}$, for any $1\leq j\leq n$. 
	\begin{figure}[ht]
		\centering
		\includegraphics[scale=.75]{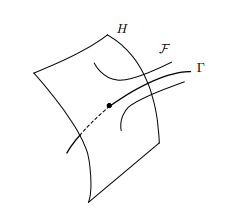} 
		\caption{No transverse negative resonance}
	\end{figure}
	
	As consequence of the definition of crossing type foliation, there exist local coordinates $(x,\mathbf{z})$, so-called adapted to $(\mathcal F,H,\Gamma)$, such that the curve $\Gamma$ and the hypersurface $H$ are expressed respectively by $\Gamma:=\{\mathbf{z}=0\}$, $H:=\{x=~0\}$. Moreover, if the local generators of $(\mathcal F,H,\Gamma)$ have no transverse negative resonance, we can choose a local generator in these adapted coordinates which has the form  \begin{equation}
		x\partial_x+\sum_{i=1}^{n}\sum_{j=1}^{n}a_{ij}z_j\partial_{z_i}+\sum _{i=1}^{n}b_i(x,\mathbf{z})\partial_{z_i},\label{eq2}
	\end{equation}
	where $(a_{ij})_{n\times n}$ is a constant matrix, and $b_i(x,0)=\frac{\partial b_i}{\partial z_j}(x,0)=0$ for all $i,j\in\{1,\dots,n\}$. We say that a vector field of this form is an \textit{$\mathit{x}$-normalized} vector field. 
	
	Our main goal is to classify such singular foliations up to analytic equivalence. Here, we say that two crossing type foliations $(\mathcal F , H,\Gamma)$ and $(\mathcal G,L,\Omega)$ are \textit{analytically equivalent} if there
	exists an analytic change of coordinates mapping the leaves of $\mathcal F$ to the leaves of $\mathcal G$ and the pair $(H,\Gamma)$ to $(L,\Omega)$.
	
	The following theorem is the main result in this paper. We adapted and generalized an idea originally introduced in the thesis of Arame Diaw \cite{diaw2019geometrie,diaw2020pairs} to prove it. Below, we denote by \textit{$\Gamma$-holonomy} and \textit{$\Omega$-holonomy} the respective local holonomies along the curves $\Gamma$ of $\mathcal F$ and $\Omega$ of $\mathcal G$.
	
	\begin{main}\label{main}
		Let $(\mathcal F, H, \Gamma)$ and $(\mathcal G, L,\Omega)$ be two crossing type foliations such that:
		\begin{enumerate}[\fontfamily{pzc}\selectfont (a)]
			\item\label{conda} The linear part of the local generators of $\mathcal F$ and $\mathcal G$ are conjugated.
			\item\label{condb} The local generators of $\mathcal F$ (and therefore that of $\mathcal G $) have no transverse negative resonance.
			\item\label{condc} The respective $\Gamma$-holonomy and $\Omega$-holonomy are analytically conjugated.
		\end{enumerate}
		Then, $(\mathcal F, H, \Gamma)$ and $(\mathcal G, L, \Omega)$ are analytically equivalent.
	\end{main}
	
	Motivated by the Main Theorem, we say that a crossing type foliation $(\mathcal F,H,\Gamma)$ is \textit{analytically classified by its linear part and its $\Gamma$-holonomy} if all crossing type foliations $(\mathcal G,L,\Omega)$ with a conjugated linear part and a conjugated $\Omega$-holonomy to $(\mathcal F,H,\Gamma)$ is analytically equivalent to $(\mathcal F,H,\Gamma)$. 
	
	As a consequence of this theorem, we can give a unified proof for a result obtained with different tools by Mattei and Moussu \cite{ASENS_1980_4_13_4_469_0} and later by Martinet and Ramis \cite{martinet1982problemes}. In a recent paper \cite{diaw2020pairs}, Diaw and Loray also use similar techniques to reprove this theorem. In our notation, we can enunciate it as follows.
	\begin{corollary}\label{mattei}
		
		Consider a crossing type foliation $(\mathcal F,H,\Gamma)\in(\mathbb C^2,0)$ which has, in adapted coordinates, a $x$-normalized local generator of the form $$X=x \partial_x + yf(x,y) \partial_y,$$
		where $f$ is a germ analytic function such that $f(0,0)=\lambda$. Then, two cases can occur.
		\begin{enumerate}
			\item The eigenvalue $\lambda$ belongs to $\mathbb C\setminus\mathbb R_{\leq0}$, then $(\mathcal F,H,\Gamma)$ is analytically linearizable.
			\item The eigenvalue $\lambda$ belongs to $\mathbb R_{\leq0}$, then $(\mathcal F,H,\Gamma)$ is analytically classified by its linear part and its $\Gamma$-holonomy.
		\end{enumerate}
		
	\end{corollary}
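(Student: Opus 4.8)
The eigenvalues of the linear part of $X$ are $1$ (in the $\Gamma$-direction, normalized by the $x$-normalized form) and $\lambda$ (in the $H$-direction), so the dichotomy is governed by the position of the origin relative to the segment joining $1$ and $\lambda$: the convex hull of $\{1,\lambda\}$ contains $0$ precisely when $\lambda\in\mathbb R_{\leq 0}$. Thus case (2) is exactly the Siegel situation and case (1) the Poincaré situation. I would also record at the outset that, for $n=1$, the positive cone is $\mathcal C=\{p\lambda:p\in\mathbb Z_{\geq 1}\}$, so the no transverse negative resonance condition $(p-1)\lambda\neq q$ (for $p\geq 1$, $q\geq 1$) fails exactly when $\lambda\in\mathbb Q_{>0}$; in particular it holds for every $\lambda\in\mathbb R_{\leq 0}$.

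For case (2) the plan is to reduce directly to the Main Theorem. Since $\mathbb R_{\leq 0}\cap\mathbb Q_{>0}=\emptyset$, the generator $X$ has no transverse negative resonance, and the same is true of any crossing type $(\mathcal G,L,\Omega)$ whose linear part is conjugate to that of $\mathcal F$, as it carries the same $\lambda\leq 0$. Hence, given such a $\mathcal G$ with $\Omega$-holonomy analytically conjugate to the $\Gamma$-holonomy of $\mathcal F$, hypotheses (a), (b), (c) of the Main Theorem are all satisfied, and the theorem yields an analytic equivalence between $(\mathcal F,H,\Gamma)$ and $(\mathcal G,L,\Omega)$. This is precisely the assertion that $(\mathcal F,H,\Gamma)$ is analytically classified by its linear part and its $\Gamma$-holonomy, so case (2) is immediate.

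For case (1) I would instead argue at the level of the vector field, bypassing the holonomy (where for irrational $\lambda>0$ small divisors would otherwise intervene). Because $\{1,\lambda\}$ lies in the Poincaré domain, the resonant denominators $m_1+m_2\lambda-\lambda_j$ are bounded away from zero, so the Poincaré--Dulac theorem provides an analytic change of coordinates carrying $X$ to a polynomial normal form retaining only resonant monomials. A short bookkeeping of the relations $m_1\cdot 1+m_2\lambda=\lambda_j$ with $m_1+m_2\geq 2$ shows that, inside the Poincaré domain, the only resonances are $\lambda=k$ and $\lambda=1/k$ for an integer $k\geq 2$, with unique nonlinear candidate monomials $x^k\partial_y$ and $y^k\partial_x$, respectively. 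When $\lambda$ is non-resonant the normal form is already $x\partial_x+\lambda y\partial_y$, proving linearizability.

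The remaining, and main, obstacle is to rule out these two resonant monomials, and here I would invoke the crossing type hypothesis as a rigidity constraint. The normalizing biholomorphism carries the two smooth transverse separatrices $H$ and $\Gamma$ of $\mathcal F$ to two smooth transverse separatrices of the normal form. However, a normal form $x\partial_x+(ky+cx^k)\partial_y$ with $c\neq 0$ has leaves $y=x^k(C+c\log x)$, so its only smooth separatrix through the origin is $\{x=0\}$; symmetrically, $(x+cy^k)\partial_x+\tfrac1k y\partial_y$ with $c\neq 0$ admits only $\{y=0\}$. In either case a nonzero resonant coefficient is incompatible with the existence of two transverse smooth separatrices, forcing $c=0$. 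Therefore the normal form is linear, $(\mathcal F,H,\Gamma)$ is analytically linearizable, and case (1) follows. I expect this resonant rigidity step to be the delicate point, the rest being classical normalization theory together with the elementary description of the cone $\mathcal C$.
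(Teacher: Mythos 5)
Your proposal is correct and takes essentially the same route as the paper, whose proof is a terse version of exactly your two steps: in the Siegel case $\lambda\in\mathbb R_{\leq0}$ one checks that the no transverse negative resonance condition holds (your computation that it fails only for $\lambda\in\mathbb Q_{>0}$ is right) and invokes the Main Theorem, while in the Poincar\'e case the existence of the two transverse analytic separatrices $H$ and $\Gamma$ excludes any Poincar\'e-type resonance, so the normal form is linear. Your explicit resonance bookkeeping ($\lambda=k$ or $\lambda=1/k$ with monomials $x^k\partial_y$, $y^k\partial_x$) and the $\log$-term argument showing a nonzero resonant coefficient leaves only one smooth separatrix merely supply the details the paper leaves implicit.
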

	
	We recall that a germ of singular vector field in $(\mathbb C^n,0)$ with eigenvalues $\lambda_1,\dots,\lambda_n$ is in the Siegel (resp. Poincaré) domain if the origin lies (resp. does not lie) in the convex hull of the eigenvalues in $\mathbb C$.
	
	\begin{proof}
		In the Poincaré case ($\lambda\in\mathbb C\setminus\mathbb R_{\leq0}$), the result is immediate since the existence of two analytic separatrices implies that there can be no resonance of Poincaré type. In the Siegel case ($\lambda\in\mathbb R_{\leq0}$), it is sufficient to remark that the condition of no transverse negative resonance is satisfied. Hence, the result is a consequence of the Main Theorem.
	\end{proof}
	
	In dimension three, a generalization of this result, which will be proved in section \ref{sectionfinal}, is the following corollary. 
	
	\begin{corollary}\label{corollary2}
		Consider a crossing type foliation $(\mathcal F,H,\Gamma)\in(\mathbb C^3,0)$ which has, in adapted coordinates, a $x$-normalized local generator with semi-simple part
		
		$$x\partial_x +\lambda y \partial_y+ \mu z\partial_z.$$
		
		Then, three cases can appear: 
		
		\begin{enumerate}[1.]
			\item The eigenvalues are in the Poincaré domain. Then $(\mathcal F,H,\Gamma)$ is analytically normalizable and has at most a finite number of resonant monomials.
			\item The eigenvalues are in the Siegel domain and at least one of the eigenvalues $\lambda,\mu$ is non-real. Then $(\mathcal F,H,\Gamma)$ is analytically classified by its linear part and its $\Gamma$-holonomy.
			\item The eigenvalues are in the Siegel domain and all real. Then, either $(\mathcal F,H,\Gamma)$ is analytically classified by its linear part and its holonomy or one of the following conditions holds up to a permutation of the $y$ and $z$ coordinates:
			\begin{enumerate}
				\item\label{itema} Either  $\mu<\lambda\leq0$ and \begin{equation}\label{corolario2.0}
					p\lambda=\mu+q,
				\end{equation}
				for some $p,q\in\mathbb Z_{\geq1}$.
				\item\label{itemb} Or, $\mu\leq0<\lambda $, and either $(\ref{corolario2.0})$ holds or $\lambda\in\mathbb Q_{>0}-\mu\mathbb Q_{\geq0}$ (notice that these conditions are not mutually exclusive).
			\end{enumerate} 
		\end{enumerate}
	\end{corollary}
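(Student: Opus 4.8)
The proof reduces the classification to a purely arithmetic question about the eigenvalues $1,\lambda,\mu$, namely deciding when a \emph{transverse negative resonance} exists; once this is settled, assertion~1 follows from classical normalization theory and assertions~2 and~3 from the Main Theorem. The plan is therefore to treat the three cases according to the position of $1,\lambda,\mu$ in $\mathbb C$. In the Poincaré case I would invoke Poincaré--Dulac: since $0$ lies outside the convex hull of $\{1,\lambda,\mu\}$, these eigenvalues lie in an open half-plane bounded by a line through the origin, so $|\langle k,(1,\lambda,\mu)\rangle|\to\infty$ as $|k|\to\infty$. Hence only finitely many resonances $\langle k,(1,\lambda,\mu)\rangle=\lambda_j$ occur, and the germ is analytically conjugate to a polynomial normal form carrying only these finitely many resonant monomials, the conjugation being chosen to respect the pair $(H,\Gamma)$. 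This gives assertion~1.

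For the two Siegel cases I would apply the Main Theorem: conditions \ref{conda} and \ref{condc} are built into the definition of being ``classified by linear part and $\Gamma$-holonomy,'' so the only point to verify is condition \ref{condb}, the absence of transverse negative resonance. Everything thus reduces to the solvability of
\[
p\lambda+r\mu=\mu_j+q,\qquad p,r\in\mathbb Z_{\ge0},\ p+r\ge1,\ q\in\mathbb Z_{\ge1},\ \mu_j\in\{\lambda,\mu\},
\]
which I shall call $(\ast)$. The heart of case~2 is the claim that if $(\ast)$ has a solution and $0$ lies in the convex hull of $\{1,\lambda,\mu\}$, then $\lambda,\mu\in\mathbb R$. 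To prove it I rewrite $(\ast)$ as $\alpha\lambda+\beta\mu=q$ with $(\alpha,\beta)\in\mathbb Z^2$ subject to the sign constraints inherited from $(\ast)$, and write the Siegel relation as $s+t\lambda+u\mu=0$ with $s,t,u\ge0$, $s+t+u=1$. Taking imaginary parts shows that $(\mathrm{Im}\,\lambda,\mathrm{Im}\,\mu)$ is orthogonal to both $(\alpha,\beta)$ and $(t,u)$. If these are linearly independent, then $\mathrm{Im}\,\lambda=\mathrm{Im}\,\mu=0$. If they are parallel, comparing real parts gives a contradiction: the positive-multiple case yields $s+cq=0$ with $s\ge0$, $c>0$, $q\ge1$, which is impossible, while the nonpositive-multiple case is ruled out by the constraint $p+r\ge1$. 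Consequently a non-real eigenvalue in the Siegel domain precludes $(\ast)$, so the Main Theorem applies and yields assertion~2.

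For case~3 (Siegel, all real) I would carry out the explicit arithmetic of $(\ast)$. Here $0$ in the convex hull forces $\min(\lambda,\mu)\le0$, so up to permuting $y$ and $z$ there are two sign patterns. When $\mu<\lambda\le0$, taking $\mu_j=\lambda$ gives $r\mu\le\mu<\lambda\le\lambda+q$, so no solution arises there, while $\mu_j=\mu$ forces $r=0$ and leaves exactly $p\lambda=\mu+q$, i.e. equation $(\ref{corolario2.0})$, which is condition \ref{itema}. When $\mu\le0<\lambda$, a parallel elimination shows the solutions of $(\ast)$ split into the family $p\lambda=\mu+q$ (from $\mu_j=\mu$, $r=0$) and the families with a positive coefficient on $\lambda$, which rearrange to $\lambda=\tfrac{q}{m}-\mu\tfrac{\ell}{m}\in\mathbb Q_{>0}-\mu\mathbb Q_{\ge0}$; together these give condition \ref{itemb}. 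In either pattern, absence of these resonances means ``no transverse negative resonance,'' so the Main Theorem yields the classification, establishing the dichotomy.

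The main obstacle is the arithmetic bookkeeping of case~3: one must show that the infinite families of solutions of $(\ast)$ coincide \emph{exactly} with the stated conditions, tracking the asymmetry between $\lambda$ and $\mu$ and the constraints $p+r\ge1$, $q\ge1$. It is precisely these constraints that make $p\lambda=\mu+q$ a genuinely separate condition, not absorbed into $\mathbb Q_{>0}-\mu\mathbb Q_{\ge0}$, and that (through the independence of the resonance vector and the Siegel barycentric vector) drive the orthogonality argument of case~2.
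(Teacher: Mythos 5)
Your proposal is correct, and its global skeleton is the paper's: everything reduces to deciding solvability of the resonance equation, the Poincar\'e case is dispatched by classical Poincar\'e--Dulac theory (the paper's written proof in fact treats only the Siegel cases, leaving case 1 as classical), and the real Siegel case is handled by the same elimination the paper performs via its three conditions $(\star)$, $(\star\star)$, $(\star\star\star)$ --- your families match the paper's conclusions exactly, including the observation that a solution of $p_1\lambda+p_2\mu=q$ forces $p_1\geq 1$, whence $\lambda\in\mathbb Q_{>0}-\mu\mathbb Q_{\geq0}$. Where you genuinely diverge is case 2. The paper normalizes $\mathrm{Im}\,\lambda>0$, describes the Siegel locus as the sector $\mathcal S=\{z:\pi\leq\arg z\leq\pi+\arg\lambda\}$, treats $\mu=0$ separately, and argues geometrically: the resonance cone $\mathcal C$ lies in the closed half-plane $\{z:\mathrm{Re}(z\,\overline{i(\lambda-\mu)})\geq0\}$, while every $n\in\mathbb Z_{\geq1}$ satisfies $\mathrm{Re}(n\,\overline{i(\lambda-\mu)})=-n\,\mathrm{Im}(\lambda-\mu)<0$, so no positive integer is resonant. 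You instead encode the Siegel hypothesis barycentrically as $s+t\lambda+u\mu=0$ and take imaginary parts of this and of the resonance relation $\alpha\lambda+\beta\mu=q$, so that $(\mathrm{Im}\,\lambda,\mathrm{Im}\,\mu)$ is orthogonal to both $(t,u)$ and $(\alpha,\beta)$: linear independence forces both eigenvalues real, and the parallel degeneration is excluded by $s,t,u\geq0$, $q\geq1$ and $\alpha+\beta=p+r-1\geq0$ (note $(t,u)\neq(0,0)$, else the barycentric relation gives $s=0$ against $s=1$). Your version is more uniform --- no orientation choice, no separate $\mu=0$ subcase, no picture --- at the price of the parallel-case bookkeeping; the paper's half-plane argument is more geometric and makes the shape of the Siegel sector do the work. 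Two cosmetic slips, neither fatal: in case 3 your inequality chain $r\mu\leq\mu<\lambda\leq\lambda+q$ for $\mu_j=\lambda$ as written only covers $p=0$ (for $p\geq1$ one uses $p\lambda+r\mu\leq\lambda<\lambda+q$ directly), and you tacitly skip the boundary subcase $\lambda=\mu\leq0$, where the same inequalities show $(\ref{corolario2.0})$ cannot hold --- which is precisely how the paper justifies the strict inequality $\mu<\lambda$ in its first alternative of case 3.
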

	
	The basic tool used in this paper is the concept of $D_{r,R}$-transversely formal series. A \textit{$\mathit{D_{r,R}}$-transversely formal series} is a formal series of the form $\sum_{k_i\in\mathbb N }f_K(x)\mathbf{z} ^K$, where $\mathbf{z} ^K=z^{k_1}\dots z^{k_n}$, and each coefficient $f_K(x)$ is convergent in the annulus $ D_{r,R}:=\{x\in\mathbb C ;r<|x|<R\}$, where $r,R> 0$.
	
	The $D_{r,R}$-transversely formal derivations (derivations over the ring of the $D_{r,R}$-transversely formal series) can be seen as vector fields with coefficients being $D_{r,R}$-transversely formal series. We study the exponential map, the normal form, and the symmetries for such derivations.
	
	After a general study of these objects, we focus on a specific class of vector field. We say that a $D_{r,R}$-transversely formal vector field is \textit{$\mathit{x}$-normalized} if it has the form $(\ref{eq2})$, and $b_1,\dots,b_n$ all lie in the ideal generated by monomials of the form $z_iz_j$ for $i,j\in\{1,\dots,n\}$ in the ring of $D_{r,R}$-transversely formal series.
	
	If the components $f_K$ of a $D_{r,R}$-transversely formal vector field converge in the disk $D_R:=\{z\in\mathbb C ;|z|<R\}$, where $R> 0$, we say that the vector field is a \textit{$\mathit{D_R}$-transversely formal vector field}. Connecting these two classes of vector fields, the following result is a central step to the proof of the Main Theorem.
	
	\begin{theorem}\label{theo1}
		Let $X$ be an $x$-normalized $D_R$-transversely formal vector field that has no transverse negative resonance and $Y$ be an $x$-normalized $D_{r,R}$-transversely formal vector field. If $[X,Y]=0$, then $Y$ is an $x$-normalized $D_R$-transversely formal vector field.
	\end{theorem}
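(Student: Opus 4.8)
The plan is to exploit the commutation relation $[X,Y]=0$ as an infinite system of \emph{homological equations}, graded by the total degree in $\mathbf{z}$, and to read off from each of them that the Laurent coefficients of $Y$ in the variable $x$ can carry no strictly negative powers. I would first note that a constant linear change in the $\mathbf{z}$ variables preserves all the relevant classes ($x$-normalization, the ideal $(z_iz_j)$, and the $D_R$- and $D_{r,R}$-properties), so we may assume the matrix $(a_{ij})$ of the linear part of $X$ is in Jordan form, with semisimple part $L_s=\sum_i\mu_i z_i\partial_{z_i}$ and $1,\mu_1,\dots,\mu_n$ the eigenvalues of the linear part. Writing $X=X_0+X_+$ and $Y=Y_0+Y_+$, where $X_0=x\partial_x+L$ (resp.\ $Y_0$) is the degree-preserving part and $X_+$ (resp.\ $Y_+$) the nonlinear tail $\sum_i b_i\partial_{z_i}$, which lies in $(z_iz_j)$ and hence strictly raises the $\mathbf{z}$-degree, I would expand $[X,Y]=0$ by $\mathbf{z}$-degree. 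Since $\ad_{X_0}=[X_0,\cdot]$ preserves the $\mathbf{z}$-degree and the degree-$1$ part reduces to the constant relation $[X_0,Y_0]=0$, the degree-$d$ component for $d\ge2$ reads
\[
[X_0,Y_+^{(d)}] \;=\; -[X_+^{(d)},Y_0]-[X_+,Y_+]^{(d)} \;=:\; G^{(d)},
\]
where $Y_+^{(d)}$ is the degree-$d$ part of $Y_+$ and the right-hand side involves only $X$ (entirely known and $D_R$) together with the parts $Y_+^{(q)}$ of strictly smaller degree $q\le d-1$.

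This structure invites an induction on $d$. The base case $d=2$ gives $G^{(2)}=-[X_+^{(2)},Y_0]$, whose coefficients are $D_R$ because $X_+^{(2)}$ is $D_R$ and $Y_0$ has constant coefficients. Assuming inductively that $Y_+^{(q)}$ is $D_R$ for all $q<d$, the term $[X_+,Y_+]^{(d)}=\sum_{p+q=d+1}[X_+^{(p)},Y_+^{(q)}]$ pairs a $D_R$ factor $X_+^{(p)}$ with a $D_R$ factor $Y_+^{(q)}$, both indices being at most $d-1$; hence $G^{(d)}$ is again $D_R$, i.e.\ its Laurent expansion in $x$ contains only nonnegative powers. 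It then remains to invert $\ad_{X_0}$ on the degree-$d$ piece and check that the solution inherits the $D_R$ property.

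The heart of the matter—and the step where the hypotheses are genuinely used—is the spectral analysis of $\ad_{X_0}$ in the joint monomial basis $x^m\mathbf{z}^K\partial_{z_k}$. A direct computation gives
\[
[X_0,\,x^m\mathbf{z}^K\partial_{z_k}] \;=\; \bigl(m+\langle\mu,K\rangle-\mu_k\bigr)\,x^m\mathbf{z}^K\partial_{z_k},
\qquad \langle\mu,K\rangle=\textstyle\sum_i\mu_i k_i,
\]
so that, at fixed $x$-mode $m$, the operator $\ad_{X_0}$ acts on the finite-dimensional space of degree-$d$ vector fields with eigenvalues $m+\langle\mu,K\rangle-\mu_k$ (in the non-diagonalizable case with an additional commuting nilpotent part from $L_n$, which does not affect invertibility). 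Expanding $[X_0,Y_+^{(d)}]=G^{(d)}$ coefficientwise yields $(m+\langle\mu,K\rangle-\mu_k)\,\beta_{k,K,m}=g_{k,K,m}$ for the Laurent coefficients $\beta_{k,K,m}$ of $Y$ and $g_{k,K,m}$ of $G^{(d)}$. For $m<0$ one has $g_{k,K,m}=0$ by the inductive $D_R$-ness of $G^{(d)}$; moreover the eigenvalue $m+\langle\mu,K\rangle-\mu_k$ cannot vanish, since $\langle\mu,K\rangle$ with $|K|=d\ge1$ lies in the cone $\mathcal C$ while $\mu_k+(-m)$ has the form $\mu_k+q$ with $q=-m\in\mathbb Z_{\ge1}$, and equality of the two is exactly a transverse negative resonance, which is excluded by hypothesis. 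Hence $\beta_{k,K,m}=0$ for every $m<0$, i.e.\ $Y_+^{(d)}$ is $D_R$, closing the induction.

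I expect the main obstacle to be bookkeeping rather than conceptual: one must invoke the resonance condition with the correct indices, observing that $\langle\mu,K\rangle$ ranges over $\mathcal C$ precisely because $|K|=d\ge1$ and that the offset $-m$ is a \emph{positive} integer exactly on the negative Laurent part we wish to kill. A secondary technical point is the non-semisimple case: since $L_n$ commutes with $x\partial_x+L_s$ and acts nilpotently on each graded piece, $\ad_{X_0}$ restricted to the $m<0$ sector is semisimple-invertible-plus-nilpotent, hence invertible, so the vanishing conclusion is unchanged. Convergence in $x$ never enters, as we only assert the vanishing of coefficients of the already-given $D_{r,R}$ series $Y$.
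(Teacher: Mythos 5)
Your argument is correct, but it reaches the conclusion by a genuinely different route than the paper. The paper first conjugates $X$ into the Poincar\'e--Dulac normal form of Proposition \ref{12}, so that the semisimple part of the \emph{whole} field is exactly $x\partial_x+L(\mu)$, and then invokes uniqueness of the Jordan decomposition to pass from $[X,Y]=0$ to $[x\partial_x+L(\mu),Y]=0$; at that point every Laurent monomial $d_{lK}x^l\mathbf{z}^KL(\lambda_K)$ of $Y$ is an eigenvector of $\ad_{x\partial_x+L(\mu)}$ with eigenvalue $l+\langle\mu,K\rangle$, so all of $Y$ is treated in a single simultaneous eigenvalue equation, with no induction and no right-hand side: $d_{lK}\neq0$ forces $l=-\langle\mu,K\rangle$, and no transverse negative resonance forces $l\geq0$. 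You never normalize the tail of $X$ and never use the semisimple-part commutation; you only Jordanize the linear part and replace both steps by the graded homological equations $[X_0,Y_+^{(d)}]=G^{(d)}$, propagating $D_R$-ness of $G^{(d)}$ by induction on the $\mathbf{z}$-degree and killing the negative $x$-modes of $Y_+^{(d)}$ by invertibility of $\ad_{X_0}$ on the $m<0$ sector. The arithmetic core is identical: your eigenvalue $m+\langle\mu,K\rangle-\mu_k$ with $K\in\mathbb{N}^n$, $|K|=d$, is the paper's $l+\langle\mu,K'\rangle$ with $K'=K-e_k\in\mathcal{L}_{n,d-1}$, and its nonvanishing for $m\leq-1$ is in both cases precisely the no transverse negative resonance hypothesis. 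As for what each approach buys: the paper's proof is shorter given Proposition \ref{12} (which it needs elsewhere anyway), but it silently relies on the nontrivial assertion, in this infinite-dimensional transversely formal setting, that the centralizer of $X$ is contained in the centralizer of its semisimple part; your scheme avoids both the normal form and that assertion, and works entirely with finite-dimensional linear algebra on each fixed $(m,d)$-sector, at the cost of the inductive bookkeeping and of handling the nilpotent block of $X_0$ by hand --- which you do correctly, since $[x\partial_x+L_s,L_n]=0$ makes $\ad_{X_0}$ a commuting sum of an invertible semisimple operator and a nilpotent one on that sector. Two cosmetic points: the relevant degrees are $d\geq2$ (not merely $d\geq1$, though $|K|\geq1$ is all the cone membership requires), and the scalar equation $(m+\langle\mu,K\rangle-\mu_k)\beta_{k,K,m}=g_{k,K,m}$ holds literally only in the diagonalizable case, but your invertibility argument covers the general one.
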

	
	\section{Transversely Formally Objects.}\label{basic}
	
	\subsection{The Ring $\mathcal O_{r,R}[[\mathbf{z}]]$.}
	
	The basic assumptions and notations for this paper are:
	\begin{enumerate}
		\item The set of natural numbers $\mathbb N$ contains zero.
		\item The set $\{e_1,\dots,e_n\}$ is the canonical basis of $\mathbb R^n$.
		\item $\mathcal L_n$ is the set of $n$-uples $K\in\{(\mathbb N^n-e_1)\cup\dots\cup(\mathbb N^n-e_n)\}$, such that $|K|=k_1+\dots+k_n\geq0$.
		\item $\mathcal L_{n,m}$ is the set of $n$-uples $K\in\mathcal L_n$, such that $|K|=k_1+\dots+k_n\geq m$.
		\item The $\sup_{A}f$ denotes the supremum of the function $f$ on the set $A$.
		\item We denote respectively by $\mathcal O_{r,R}$ and $\mathcal O_{R}$ the rings of germs of analytic functions on $D_{r,R}$ and $D_R$.
		\item A point $p\in\mathbb C ^{n+1}$ is denoted by $p=(x,\mathbf{z})=(x,z_1,\dots,z_n)$.
	\end{enumerate}
	
	We now recall the construction of the ring $\mathcal O_{r,R}[[\mathbf{z}]]$ of formal power series of $n$ indeterminates $z_1,\dots,z_n$ with coefficients in $\mathcal O_{r,R}$. For it, we are going to use the notion of inverse limit. 
	
	Consider the ideal $\mathfrak m:=\langle z_1,\dots,z_n\rangle\subset\mathcal O_{r,R}[\mathbf{z}]$. Let $(J^{(i)},(\pi_{ji}))_{j\leq i\in\mathbb N}$ be the inverse system indexed by $\mathbb N$, where $J^{(i)}=\mathcal O_{r,R}[\mathbf{z}]/\mathfrak m^{i+1}$, and for each $j\leq i,$ $\pi_{ji}:J^{(i)}\to J^{(j)}$ is the linear map with kernel $\mathfrak m^{j+1}$ called bonding map. We define $$\mathcal O_{r,R}[[\mathbf{z}]]:=\varprojlim _{j\in \mathbb N}J^{(i)}=\left\{f\in\prod_{i\in\mathbb N}J^{(i)};f_{j}=\pi_{ji}(f_{i}){\text{ for all }}i\leq j\in \mathbb N\right\}.$$
	
	An element $f\in\mathcal O_{r,R}[[\mathbf{z}]]$ is called a \textit{$\mathit{D_{r,R}}$-transversely formal series}, and we can write $f(x,\mathbf{z})=\sum_{|K|=0}^{\infty}f_K(x)\mathbf{z} ^K$, where $\mathbf{z} ^K=z^{k_1}\dots z^{k_n}$, $|K|=k_1+\dots+k_n$, and each coefficient $f_K(x)$ is convergent in the annulus $ D_{r,R}$. We still denote by $\mathfrak m$ the maximal ideal $\langle z_1,\dots,z_n\rangle\subset\mathcal O_{r,R}[[\mathbf{z}]]$.  
	
	\begin{definition}
		We say that $f\in\mathcal O_{r,R}[[\mathbf{z}]]$ is convergent if for each $r',R'$ with $0<r<r'<R'<R$, there exist constants $C,M\in\mathbb R$ such that $\sup_{D_{r',R'}} ||f_K||\leq C M^{|K|}$. We denote by $\mathcal O_{r,R}\{\mathbf{z}\}\subset\mathcal O_{r,R}[[\mathbf{z}]]$ the ring of $D_{r,R}$-transversely convergent series.
	\end{definition}
	
	We  can define exactly in the same way as $\mathcal O_{r,R}[[\mathbf{z}]]$ the subring $\mathcal O_{R}[[\mathbf{z}]]\subset \mathcal O_{r,R}[[\mathbf{z}]]$ of formal power series of $n$ indeterminates with coefficients in $\mathcal O_R$. An element $f\in\mathcal O_{R}[[\mathbf{z}]]$ is called a \textit{$\mathit{D_{R}}$-transversely formal series}.
	
	\begin{definition}\label{1}
		We say that $f \in\mathcal O_R[[\mathbf{z}]]$ is convergent if for each $R'$ with $0<R'<R$, there exist constants $C,M$ such that $\sup_{D_{R'}} ||f_K||\leq C M^K$. We denote by $\mathcal O_{R}\{\mathbf{z}\}\subset\mathcal O_{R}[[\mathbf{z}]]$ the ring of $D_{R}$-transversely convergent series.
	\end{definition}
	
	We observe that diagram of inclusions below follows directly from the definitions.
	
	\begin{figure}[ht]
		\centering
		\begin{tikzpicture}
			\node at (0, 0)   (a) {$\mathcal{O}_{R}[[\mathbf{z}]]$};
			\node at (2, 0)   (b) {$\mathcal{O}_{r,R}\{\mathbf{z}\}$};
			
			\node at (1, 1)  (c)     {$\mathcal O_{r,R}[[\mathbf{z}]]$};
			\node at (1, -1)  (d)     {$\mathcal{O}_{R}\{\mathbf{z}\}$};
			
			\draw[right hook->] (a) --  (c);
			\draw[left hook->] (b) -- (c);
			\draw[left hook->] (d) -- (a);
			\draw[right hook->] (d) -- (b);
		\end{tikzpicture}
		\caption{Diagram of inclusions}
	\end{figure}
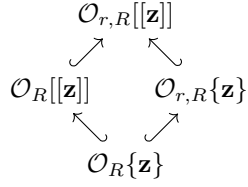
	In the next lemma, we establish that the ring in the last row is the intersection of the two in the middle line.
	
	\begin{lemma}\label{22}
		$\mathcal O_{r,R}\{\mathbf{z}\} \cap \mathcal O_R[[\mathbf{z}]]=\mathcal O_R\{\mathbf{z}\}$.
	\end{lemma}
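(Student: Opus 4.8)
The plan is to prove the two inclusions separately. The inclusion $\mathcal{O}_R\{\mathbf{z}\} \subseteq \mathcal{O}_{r,R}\{\mathbf{z}\} \cap \mathcal{O}_R[[\mathbf{z}]]$ is immediate from the diagram of inclusions already established, since $\mathcal{O}_R\{\mathbf{z}\}$ embeds into both factors; so all the content lies in the reverse inclusion.

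For the reverse inclusion, I would take a series $f = \sum_K f_K(x)\mathbf{z}^K$ in the intersection and unpack what each membership provides. From $f \in \mathcal{O}_R[[\mathbf{z}]]$, every coefficient $f_K$ is holomorphic on the full disk $D_R$, not merely on the annulus $D_{r,R}$. From $f \in \mathcal{O}_{r,R}\{\mathbf{z}\}$, for each $r',R'$ with $0 < r < r' < R' < R$ there are constants $C,M$ with $\sup_{D_{r',R'}}\|f_K\| \leq CM^{|K|}$. The goal is to promote these \emph{annular} estimates to the \emph{disk} estimates demanded by Definition \ref{1}.

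The key step, and the only place where holomorphy on the full disk is used, is the maximum modulus principle. Fixing $R'$ with $0 < R' < R$ and choosing $r',R''$ with $0 < r < r' < R' < R'' < R$ so that the circle $\{|x| = R'\}$ lies inside the annulus $D_{r',R''}$, the annular bound controls $f_K$ along that circle: $\max_{|x|=R'}|f_K(x)| \leq \sup_{D_{r',R''}}\|f_K\| \leq CM^{|K|}$. Since each $f_K$ extends holomorphically across the inner boundary and over all of $\overline{D_{R'}}$, the maximum modulus principle transfers this bound on the circle to a bound on the entire closed disk, giving $\sup_{D_{R'}}\|f_K\| \leq \max_{|x|=R'}|f_K(x)| \leq CM^{|K|}$. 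As $R'$ was arbitrary, $f$ satisfies the convergence condition of Definition \ref{1}, hence $f \in \mathcal{O}_R\{\mathbf{z}\}$.

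The main obstacle to anticipate is conceptual rather than computational: an annular growth bound by itself says nothing about the behavior of $f_K$ near the puncture $x=0$, so without the additional information that $f_K$ is holomorphic on all of $D_R$ (supplied precisely by $f \in \mathcal{O}_R[[\mathbf{z}]]$) one could not exclude, for instance, poles at the origin that would destroy the disk supremum. It is exactly the interplay between the two hypotheses --- holomorphy on the disk from one factor, geometric growth on the annulus from the other --- that makes the maximum modulus argument go through, and I would state this dependence explicitly so that the role of each factor in the intersection is transparent.
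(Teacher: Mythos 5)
Your proof is correct and follows essentially the same route as the paper: the paper also reduces the nontrivial inclusion to the Maximum Modulus Principle, isolating it in a claim that $\sup_{D_{r,R}}\|f\| = \sup_{D_R}\|f\|$ for $f$ analytic on $D_R$, and then applies it coefficientwise to promote the annular bounds to disk bounds exactly as you do. Your only deviation is cosmetic --- you nest the circle $\{|x|=R'\}$ strictly inside a larger annulus $D_{r',R''}$ rather than taking the annulus with outer radius $R'$ --- which if anything makes the boundary argument slightly more careful.
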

	
	The proof is based in the next claim. 
	
	\begin{claim}
		Let $f$ be an analytic function on ${D_{R}}$. Then $\sup_{D_{r,R}} ||f||=\sup_{D_{R}} ||f||$, for any $0<r<R$.
	\end{claim}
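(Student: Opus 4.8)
The plan is to reduce the claimed equality to the monotonicity of the radial maximum function and then invoke the maximum modulus principle. Since $D_{r,R}\subset D_R$, the inequality $\sup_{D_{r,R}}\|f\|\le\sup_{D_R}\|f\|$ is immediate, so the entire content of the claim is the reverse inequality $\sup_{D_R}\|f\|\le\sup_{D_{r,R}}\|f\|$. To handle it, I would slice both the disk and the annulus into circles of constant modulus.

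First I would introduce, for $0\le\rho<R$, the radial maximum $M(\rho):=\sup_{|x|=\rho}\|f(x)\|$, which is attained since the circle $\{|x|=\rho\}$ is compact and $\|f\|$ is continuous. Because every point of $D_R$ lies on exactly one such circle with $\rho\in[0,R)$, and every point of $D_{r,R}$ on one with $\rho\in(r,R)$, I get the two bookkeeping identities $\sup_{D_R}\|f\|=\sup_{0\le\rho<R}M(\rho)$ and $\sup_{D_{r,R}}\|f\|=\sup_{r<\rho<R}M(\rho)$. The problem is thus transported to comparing these two suprema of $M$.

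The key step is the monotonicity of $M$ on $[0,R)$. For $\rho_1<\rho_2<R$, the closed disk $\overline{D}_{\rho_1}$ is a compact subset of $D_R$, so $f$ is holomorphic on a neighborhood of it, and the maximum modulus principle gives $M(\rho_1)=\max_{\overline{D}_{\rho_1}}\|f\|\le\max_{\overline{D}_{\rho_2}}\|f\|=M(\rho_2)$. (If $f$ is valued in a normed space rather than in $\mathbb C$, the same conclusion still holds, since $\|f\|$ is subharmonic whenever $f$ is holomorphic, so it satisfies the maximum principle.) Granting this, I can discard the inner part of the radial range: for any $\rho_0\in[0,R)$ choose $\rho_1\in(r,R)$ with $\rho_1\ge\rho_0$, so that $M(\rho_0)\le M(\rho_1)\le\sup_{r<\rho<R}M(\rho)$; taking the supremum over $\rho_0$ yields $\sup_{0\le\rho<R}M(\rho)\le\sup_{r<\rho<R}M(\rho)$, which is exactly $\sup_{D_R}\|f\|\le\sup_{D_{r,R}}\|f\|$.

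I do not expect a genuine obstacle: the argument is a standard consequence of the maximum modulus principle. The only point requiring a little care is the justification that the maximum principle applies as stated — that $\overline{D}_{\rho}$ sits inside the open disk $D_R$ where $f$ is known to be holomorphic (which is why the strict inequality $\rho<R$ matters), and, if the norm is not the modulus of a scalar function, the remark that $\|f\|$ is subharmonic for holomorphic $f$.
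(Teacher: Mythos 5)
Your proposal is correct and rests on the same idea as the paper's own proof: the Maximum Modulus Principle forces $\sup_{D_R}\|f\|$ to be governed by circles near the outer boundary, which also lie in the closure of the annulus $D_{r,R}$. If anything, your slicing into circles and the monotonicity of the radial maximum $M(\rho)$ on closed subdisks $\overline{D}_{\rho}\subset D_R$ is the more careful rendering, since the paper's one-line argument speaks of the supremum being \emph{attained} on the boundary even though $f$ is only assumed analytic on the open disk (so the supremum may be infinite or unattained), whereas your argument handles that case verbatim.
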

	\begin{proof}
		The Maximum Modulus Principle for analytic functions guarantees that the $\sup_A ||f||$ is attained on the boundary of $A$. As $\partial D_{r,R}\subset \overline{D_{R}}$ and $\partial D_{R}\subset \partial D_{r,R}$, it follows that $\sup_{D_{r,R}} ||f||=\sup_{D_{R}} ||f||$.
	\end{proof}
	\begin{proof}[of Lemma \ref{22}]
		By the claim above, as $f\in\mathcal O_{r,R}[[\mathbf{z}]]$, each coefficient $f_K$ satisfies $$ \sup_{D_{R'}}||f_K||=\sup_{D_{r',R'}}||f_K||\leq CM^K,$$
		for any $0<r'<R'<R$. Consequently, $f\in\mathcal O_R\{\mathbf{z}\}$.
		The reciprocal is immediate.
	\end{proof}
	
	\subsection{Automorphisms and Derivations.}
	
	A \textit{$\mathit{\mathbb C}$-linear endomorphism} of $\mathcal O_{r,R}[[\mathbf{z}]]$ is defined by a sequence indexed by $\mathbb N$ of $\mathbb C$-linear maps
	$$\Phi^{(i)}\in\Hom_{\mathbb C}(\mathcal O_{r,R}[[\mathbf{z}]],J^{(i)})$$
	such that, for each $j<i$, $\pi_{ji}\circ \Phi^{(i)}=\Phi^{(j)}$, where $\pi_{ji}$ is a bonding map, and
	$$\Phi(f)=\prod_{i\in\mathbb N}\Phi^{(i)}(f).$$
	We denote by $\End_{\mathbb C}(\mathcal O_{r,R}[[\mathbf{z}]])$ the set of such endomorphisms, and we say that $\Phi^{(i)}$ is the $i^{th}$-truncation of $\Phi$. 
	
	A change of coordinates in $\mathcal O _{r,R}[[\mathbf{z}]]$ can be seen as a linear automorphism that preserves $\mathfrak m$. That is, an invertible endomorphism $\Phi\in \End_{\mathbb C}(\mathcal O_{r,R}[[\mathbf{z}]])$, satisfying \begin{enumerate}
		\item $\Phi(f.g)=\Phi(f).\Phi(g)$, $ \forall f,g\in\mathcal O_{r,R}[[\mathbf{z}]]$.
		\item $\Phi(\mathfrak m)\subset\mathfrak m$.
	\end{enumerate}
	We denote by $\mathcal A(\mathcal O _{r,R}[[\mathbf{z}]])$ the group of such automorphisms in $\mathcal O_{r,R}[[\mathbf{z}]]$, and we say that $\Phi\in\mathcal A(\mathcal O _{r,R}[[\mathbf{z}]])$ is a \textit{$\mathit{D_{r,R}}$-transversely formal automorphism}
	
	Similarly, a vector field with components in $\mathcal O_{r,R}[[\mathbf{z}]]$ can be seen as a derivation on this ring. That is, an endomorphism $X\in\End_{\mathbb C}(\mathcal O_{r,R}[[\mathbf{z}]])$, satisfying the Leibniz's rule $$X(f.g)=X(f).g+X(g).f\quad\forall f,g\in\mathcal O_{r,R}[[\mathbf{z}]].$$ We denote by $\mathcal D(\mathcal O_{r,R}[[\mathbf{z}]])$ the $\mathcal O_{r,R}[[\mathbf{z}]]$-module of such derivations that has a structure of Lie Algebra with the Lie Bracket given by
	$$[X,Y]=X\circ Y-Y\circ X.$$
	An element $X\in\mathcal D(\mathcal O_{r,R}[[\mathbf{z}]])$ is called \textit{$\mathit{D_{r,R}}$-transversely formal derivation}. 
	
	\begin{definition}
		A sequence $\{\Phi_{k}\}_{k\in\mathbb N}$ of endomorphism in $\mathcal O_{r,R}[[\mathbf{z}]]$ is called summable if for each $j\in\mathbb N$ there exists a natural number $K=K(j)$ such that the $j^{th}$-truncation of $\Phi_{k}$ is zero for all $k\geq K$.
	\end{definition}
	
	\begin{lemma}\label{summ}
		A summable sequence of endomorphism $\{\Phi_{k}\}_{k\in\mathbb N}$ in $\mathcal O_{r,R}[[\mathbf{z}]]$ defines an endomorphism $\Psi:=\sum_{k=0}^{\infty}\Phi_{k}$.
	\end{lemma}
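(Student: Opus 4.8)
The plan is to construct $\Psi$ directly through its system of truncations, exploiting the fact that summability turns the infinite sum into a finite one at each level. Recall from the excerpt that an element of $\End_{\mathbb C}(\mathcal O_{r,R}[[\mathbf{z}]])$ is precisely a family $\{\Theta^{(i)}\}_{i\in\mathbb N}$ of $\mathbb C$-linear maps $\Theta^{(i)}\in\Hom_{\mathbb C}(\mathcal O_{r,R}[[\mathbf{z}]],J^{(i)})$ satisfying the compatibility $\pi_{ji}\circ\Theta^{(i)}=\Theta^{(j)}$ for all $j<i$, with the associated endomorphism acting by $\Theta(f)=\prod_i\Theta^{(i)}(f)$. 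Thus it suffices to manufacture such a compatible family out of the truncations $\Phi_k^{(i)}$ of the given $\Phi_k$.

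First I would fix $i\in\mathbb N$ and use summability to choose $K(i)$ so that the $i^{th}$-truncation $\Phi_k^{(i)}=0$ for every $k\geq K(i)$. I then set $\Psi^{(i)}:=\sum_{k=0}^{K(i)-1}\Phi_k^{(i)}$, which is a finite sum of $\mathbb C$-linear maps into $J^{(i)}$ and therefore a well-defined element of $\Hom_{\mathbb C}(\mathcal O_{r,R}[[\mathbf{z}]],J^{(i)})$; its value does not depend on the chosen bound, since enlarging $K(i)$ only appends null terms. In particular $\mathbb C$-linearity of each $\Psi^{(i)}$ comes for free.

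The step that requires care is compatibility of this family with the bonding maps. For $j<i$, I would pick a uniform cutoff $N\geq\max\{K(i),K(j)\}$, so that simultaneously $\Psi^{(i)}=\sum_{k=0}^{N}\Phi_k^{(i)}$ and $\Psi^{(j)}=\sum_{k=0}^{N}\Phi_k^{(j)}$. Using the linearity of the bonding map $\pi_{ji}$ together with the fact that each $\Phi_k$ is itself an endomorphism, hence $\pi_{ji}\circ\Phi_k^{(i)}=\Phi_k^{(j)}$, I compute
$$\pi_{ji}\circ\Psi^{(i)}=\sum_{k=0}^{N}\pi_{ji}\circ\Phi_k^{(i)}=\sum_{k=0}^{N}\Phi_k^{(j)}=\Psi^{(j)}.$$
The only real bookkeeping is that the thresholds $K(i)$ and $K(j)$ may differ, which is exactly what the common cutoff $N$ resolves: beyond $N$ every summand at both levels vanishes.

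With compatibility established, $\{\Psi^{(i)}\}$ defines an endomorphism $\Psi\in\End_{\mathbb C}(\mathcal O_{r,R}[[\mathbf{z}]])$ via $\Psi(f)=\prod_i\Psi^{(i)}(f)$, and since the $i^{th}$ component of $\sum_k\Phi_k(f)$ is exactly $\sum_k\Phi_k^{(i)}(f)=\Psi^{(i)}(f)$, this $\Psi$ is legitimately the sum $\sum_{k=0}^{\infty}\Phi_k$, computed truncation by truncation. I do not expect a genuine obstacle here: the whole content is that summability converts an a priori infinite formal sum into finitely many nonzero contributions at each truncation level, reducing everything to finite sums of $\mathbb C$-linear maps. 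The single point to keep honest is that the compatibility must hold across all levels at once despite $K(i)$ varying with $i$, which the uniform-cutoff argument above handles.
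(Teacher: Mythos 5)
Your proof is correct and follows essentially the same route as the paper: defining each truncation $\Psi^{(i)}$ as the finite sum guaranteed by summability and verifying compatibility with the bonding maps $\pi_{ji}$. Your version merely spells out the uniform-cutoff bookkeeping that the paper's proof leaves implicit.
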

	\begin{proof}
		By the definition of summable sequence, each $\Psi^{(j)}=\sum_{k=0}^{\infty} \Phi_{k}\mod \mathfrak m ^{j+1}$ is a finite sum of terms, and for $j<i\in\mathbb N$\begin{align*}
			\pi_{ji}\Psi^{(i)}&=\pi_{ji}\sum_{k=0}^{\infty}\Phi_{k}\mod \mathfrak m ^{i+1}\\
			&=\sum_{k=0}^{\infty}\Phi_{k}\mod \mathfrak m ^{j+1}=\Psi^{(j)}
	\end{align*}  \end{proof}
	
	We are particular interested in the case where a sequence of endomorphism is defined by successive powers $X^k=\underbrace{ X\circ \dots\circ X }_k $ of a given derivation $X$, where $X^k(f)$ means applying the derivative $X$ $k$-times on $f\in\mathcal O_{r,R}[[\mathbf{z}]]$.
	
	\begin{definition}
		A vector field $X\in \mathcal D(\mathcal O_{r,R}[[\mathbf{z}]])$ is called \textit{nilpotent} if $X$ preserves the ideal $\mathfrak m$, i.e. $X(\mathfrak m)\subset \mathfrak m$, and for all $j\in\mathbb N$ exists $N=N(j)\in\mathbb N$ such that $$X^{N}(\mathfrak m ^j)\subset \mathfrak m ^{j+1},$$
		where $\mathfrak m^0=\mathcal O_{r,R}[[z]]$. We denote by $\mathcal N(\mathcal O_{r,R}[[\mathbf{z}]])\subset \mathcal D(\mathcal O_{r,R}[[\mathbf{z}]])$ the set of $D_{r,R}$-transversely formal nilpotent vector fields.
	\end{definition}
	
	\begin{proposition}\label{3}
		Let $X\in\mathcal N(\mathcal O_{r,R}[[\mathbf{z}]])$. For any sequence $\{ c_k\}_{k\in\mathbb N}$ of complex numbers, the sequence $\{c_kX^k\}_{k\in\mathbb N}$ is summable.\end{proposition}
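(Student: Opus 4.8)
The plan is to unwind the definition of summability and reduce everything to a single containment statement about the iterates of $X$. Recall that the $j$-th truncation of an endomorphism $\Phi$ is its composition with the quotient onto $J^{(j)}=\mathcal O_{r,R}[\mathbf z]/\mathfrak m^{j+1}$, so that $\Phi^{(j)}=0$ precisely when $\Phi(\mathcal O_{r,R}[[\mathbf z]])\subseteq\mathfrak m^{j+1}$. Since $\mathfrak m^{j+1}$ is a $\mathbb C$-vector subspace, multiplying by a scalar $c_k$ does not affect this containment; hence it suffices to prove that for every $j\in\mathbb N$ there is a $K=K(j)$ with $X^k(\mathcal O_{r,R}[[\mathbf z]])\subseteq\mathfrak m^{j+1}$ for all $k\geq K$. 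The scalars $\{c_k\}$ then play no further role.

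First I would record the auxiliary fact that $X$ preserves every power of the maximal ideal, i.e. $X(\mathfrak m^i)\subseteq\mathfrak m^i$ for all $i\geq 0$. This follows from the hypothesis $X(\mathfrak m)\subseteq\mathfrak m$ together with the Leibniz rule: writing a generator of $\mathfrak m^i$ as a product $g_1\cdots g_i$ with each $g_\ell\in\mathfrak m$, the derivation $X$ sends it to $\sum_\ell g_1\cdots X(g_\ell)\cdots g_i$, and each summand is a product of one element $X(g_\ell)\in\mathfrak m$ with $i-1$ elements of $\mathfrak m$, hence lies in $\mathfrak m^i$.

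Next I would telescope the nilpotency estimates. By definition of a nilpotent vector field, for each $i$ there exists $N(i)$ with $X^{N(i)}(\mathfrak m^i)\subseteq\mathfrak m^{i+1}$. Setting $K(j):=\sum_{i=0}^{j}N(i)$ and applying these containments successively,
\[
X^{K(j)}(\mathfrak m^0)=X^{N(j)}\circ\cdots\circ X^{N(0)}(\mathfrak m^0)\subseteq X^{N(j)}\circ\cdots\circ X^{N(1)}(\mathfrak m^1)\subseteq\cdots\subseteq\mathfrak m^{j+1},
\]
where $\mathfrak m^0=\mathcal O_{r,R}[[\mathbf z]]$. Finally, for any $k\geq K(j)$ I would write $X^k=X^{k-K(j)}\circ X^{K(j)}$ and use the auxiliary fact: since $X^{K(j)}$ lands in $\mathfrak m^{j+1}$ and each further application of $X$ preserves $\mathfrak m^{j+1}$, we obtain $X^k(\mathcal O_{r,R}[[\mathbf z]])\subseteq\mathfrak m^{j+1}$. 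Thus the $j$-th truncation of $c_kX^k$ vanishes for all $k\geq K(j)$, which is exactly summability.

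The argument is essentially bookkeeping, so I do not expect a genuine obstacle; the only point demanding care is the correct composition order in the telescoping step (each new factor $X^{N(i)}$ must act on the space $\mathfrak m^{i}$ already reached) together with the preservation property $X(\mathfrak m^i)\subseteq\mathfrak m^i$, which is precisely what makes the passage from $k=K(j)$ to an arbitrary $k\geq K(j)$ legitimate.
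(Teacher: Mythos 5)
Your proof is correct and takes essentially the same route as the paper: both arguments telescope the nilpotency exponents, setting $K(j)=N(0)+\cdots+N(j)$ so that $X^{K(j)}(\mathfrak m^0)\subseteq\mathfrak m^{j+1}$. You merely make explicit two points the paper leaves implicit --- that the scalars $c_k$ are irrelevant because truncations are $\mathbb C$-linear, and that the case $k>K(j)$ follows from $X(\mathfrak m^{j+1})\subseteq\mathfrak m^{j+1}$ (the latter could even be avoided by writing $X^k=X^{K(j)}\circ X^{k-K(j)}$ and using the trivial containment $X^{k-K(j)}(\mathfrak m^0)\subseteq\mathfrak m^0$).
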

	
	\begin{proof}
		
		By the definition, there exist $N(0),N(1),\dots,N(j)\in\mathbb N$, such that
		$$ X^{N(0)}(\mathfrak m^0)\subset\mathfrak m,\quad X^{N(1)}(\mathfrak m)\subset\mathfrak m^2,\dots  X^{N(j)}(\mathfrak m^j)\subset\mathfrak m^{j+1}.$$
		Then, $$ X^{N(0)+\dots+N(j)}(\mathfrak m^0)\subset\mathfrak m^{j+1}$$
		which implies that the sequence $\{c_kX^k\}_{k\in\mathbb N}$ is summable.
	\end{proof}
	\begin{corollary}
		Let $X\in\mathcal N(\mathcal O_{r,R}[[\mathbf{z}]])$. Then, for all $t\in\mathbb C$, the sequence $\left\{\frac{t^k}{k!}X^k\right\}_{k\in\mathbb N}$ defines an endomorphism $\exp(tX)=\sum_{k=0}^{\infty}\frac{t^k}{k!}X^k$ called the time $t$ exponential.
	\end{corollary}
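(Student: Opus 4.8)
The plan is to reduce the statement directly to the two results just established, Proposition \ref{3} and Lemma \ref{summ}. First I would observe that each power $X^k=X\circ\dots\circ X$ is itself a $\mathbb C$-linear endomorphism of $\mathcal O_{r,R}[[\mathbf z]]$: the derivation $X$ is in particular a $\mathbb C$-linear endomorphism, and $\End_{\mathbb C}(\mathcal O_{r,R}[[\mathbf z]])$ is closed under composition and under multiplication by a complex scalar. Hence $\left\{\frac{t^k}{k!}X^k\right\}_{k\in\mathbb N}$ is a genuine sequence in $\End_{\mathbb C}(\mathcal O_{r,R}[[\mathbf z]])$, namely the sequence $\{c_kX^k\}_{k\in\mathbb N}$ with the particular complex coefficients $c_k=t^k/k!$.

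Next I would apply Proposition \ref{3} to this specific choice of scalars. Since $X\in\mathcal N(\mathcal O_{r,R}[[\mathbf z]])$ by hypothesis, the proposition guarantees that $\{c_kX^k\}_{k\in\mathbb N}=\left\{\frac{t^k}{k!}X^k\right\}_{k\in\mathbb N}$ is summable. Concretely, this means that for each $j\in\mathbb N$ only finitely many of the truncations $\left(\frac{t^k}{k!}X^k\right)^{(j)}$ are nonzero, which is exactly the hypothesis needed to form the sum term by term on each quotient $J^{(i)}$.

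Finally, I would invoke Lemma \ref{summ}, which asserts precisely that a summable sequence of endomorphisms defines an endomorphism equal to its formal sum. Applying it to $\left\{\frac{t^k}{k!}X^k\right\}_{k\in\mathbb N}$ yields the well-defined endomorphism $\exp(tX)=\sum_{k=0}^{\infty}\frac{t^k}{k!}X^k\in\End_{\mathbb C}(\mathcal O_{r,R}[[\mathbf z]])$, as claimed. I do not expect any genuine obstacle here: the entire content of the corollary is carried by Proposition \ref{3} (summability for \emph{any} sequence of complex coefficients, applied with $c_k=t^k/k!$) and by Lemma \ref{summ} (summable sequences sum to endomorphisms). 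The only point that deserves an explicit line is the preliminary remark that each $X^k$ is a legitimate $\mathbb C$-linear endomorphism, so that both cited results apply verbatim.
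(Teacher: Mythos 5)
Your proof is correct and follows essentially the same route as the paper: the paper's proof also just sets $c_k=\frac{t^k}{k!}$ and applies Proposition \ref{3}, with Lemma \ref{summ} implicitly supplying the passage from summability to a well-defined endomorphism. Your explicit invocation of Lemma \ref{summ} and the remark that each $X^k$ is a $\mathbb C$-linear endomorphism only make the same argument slightly more detailed.
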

	\begin{proof}
		Making $c_k=\dfrac{t^k}{k!}$, we conclude from Proposition \ref{3} that the series $\sum_{k=0}^{\infty}\frac{t^k}{k!}X^k$ converges.
	\end{proof}
	
	A corollary of the proof of Proposition \ref{3} is the following.
	\begin{corollary}\label{7}
		For all $i\in\mathbb N$, the $i^{th}$-truncation of the automorphism $\exp(tX)$ is polynomial in $t\in \mathbb C$ with coefficients in $\mathcal O_{r,R}$. In other words, for $f\in \mathcal O_{r,R}[[\mathbf{z}]]$, $\exp(tX)(f)=\left(\sum_{k=0}^{\infty}\frac{t^k}{k!}X^k (f)\right)\mod\mathfrak m ^{i+1}$ belongs to $\mathcal O_{r,R}[t][[\mathbf{z}]]$.
	\end{corollary}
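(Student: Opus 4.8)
The plan is to extract the statement directly from the proof of Proposition \ref{3}, whose nilpotency estimate already does all the work; the only thing left is to notice that, modulo a fixed power of $\mathfrak m$, the exponential series collapses to a finite sum whose length is controlled uniformly in $f$.

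First I would fix $i\in\mathbb N$ and invoke the estimate obtained in the proof of Proposition \ref{3}: setting $N:=N(0)+N(1)+\dots+N(i)$, one has $X^{N}(\mathfrak m^{0})\subset\mathfrak m^{i+1}$. Since $\mathfrak m^{0}=\mathcal O_{r,R}[[\mathbf{z}]]$, this bound is uniform in the input. I would then record that, because $X$ preserves $\mathfrak m$ and hence each power $\mathfrak m^{\ell}$, for every $f\in\mathcal O_{r,R}[[\mathbf{z}]]$ and every $k\geq N$ we have $X^{k}(f)=X^{k-N}\bigl(X^{N}(f)\bigr)\in\mathfrak m^{i+1}$. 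Thus every exponential term of index $k\geq N$ vanishes under the $i^{\mathrm{th}}$-truncation.

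Next I would write the $i^{\mathrm{th}}$-truncation as the resulting finite sum,
\begin{equation*}
  \exp(tX)(f)\bmod\mathfrak m^{i+1}=\sum_{k=0}^{N-1}\frac{t^{k}}{k!}\,X^{k}(f)\bmod\mathfrak m^{i+1},
\end{equation*}
and observe that this is a polynomial in $t$ of degree at most $N-1$. Each coefficient $\tfrac{1}{k!}X^{k}(f)\bmod\mathfrak m^{i+1}$ lies in $J^{(i)}=\mathcal O_{r,R}[\mathbf{z}]/\mathfrak m^{i+1}$, i.e.\ is a polynomial in $\mathbf{z}$ with coefficients in $\mathcal O_{r,R}$, so the truncation belongs to $\mathcal O_{r,R}[t][[\mathbf{z}]]$, which is exactly the asserted conclusion.

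I expect no real obstacle here beyond bookkeeping; the single point that must be handled with care is that the cutoff $N$ depends on $i$ alone and not on $f$, which is precisely what the inclusion $X^{N}(\mathfrak m^{0})\subset\mathfrak m^{i+1}$ secures. The compatibility of these truncations across different $i$ — needed for them to assemble into a genuine endomorphism — is already furnished by Lemma \ref{summ} together with the corollary defining $\exp(tX)$, so no additional argument is required.
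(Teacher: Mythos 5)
Your proposal is correct and follows exactly the route the paper intends: the corollary is stated as ``a corollary of the proof of Proposition \ref{3}'', i.e.\ the uniform bound $X^{N(0)+\dots+N(i)}(\mathfrak m^{0})\subset\mathfrak m^{i+1}$ makes the $i^{\mathrm{th}}$-truncation of the exponential series a finite sum $\sum_{k=0}^{N-1}\frac{t^{k}}{k!}X^{k}(f)\bmod\mathfrak m^{i+1}$, which is precisely your argument. Your additional remarks --- that $X$ preserves every power of $\mathfrak m$ (by the Leibniz rule, since $X(\mathfrak m)\subset\mathfrak m$) so terms of index $k\geq N$ die, and that the cutoff $N$ depends on $i$ alone and not on $f$ --- are exactly the bookkeeping the paper leaves implicit.
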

	
	In order to study the invertibility of the exponential map, we consider automorphisms and derivations satisfying some flatness conditions. We say that a $D_{r,R}$-transversely formal automorphism $\Phi\in \mathcal A(\mathcal O _{r,R}[[\mathbf{z}]])$ is \textit{tangent to the identity to order} $\mathit{k}$ if $\Phi(x)=x\mod\mathfrak m^{k+1}$ and $\Phi(z_i)=z_i\mod\mathfrak m^{k+1}$ for all $i\in\{1,\dots,n\}$. We denote by $\mathcal A_k(\mathcal O _{r,R}[[\mathbf{z}]])$ the subset of such automorphisms. 
	
	A vector field $X=a(x,\mathbf{z})\partial_x+\sum_{j=1}^nb_j(x,\mathbf{z})\partial_{z_j} \in \mathcal N(\mathcal O_{r,R}[[\mathbf{z}]])$ is \textit{$\mathit{k}$-flat} if $a(x,\mathbf{z})\in \mathfrak m^{k}$, $b_i(x,\mathbf{z})\in \mathfrak m^{k+1}$ for all $i\in\{1,\dots,n\}$. We denote by $\mathcal N_k(\mathcal O_{r,R}[[\mathbf{z}]])$ for the subset of such derivations. 
	
	\begin{proposition}(\cite{serre2009lie}, Theorem 7.2)\label{18}
		For each integer number $k\geq1$, the exponential map $\exp:\mathcal N_k(\mathcal O _{r,R}[[\mathbf{z}]])\to \mathcal A_k(\mathcal O _{r,R}[[\mathbf{z}]])$; $X\mapsto\exp(X)$ is one-to-one with inverse given by, $\log (\Phi)=\sum_{n=0}^{\infty}\frac{(-1)^{n+1}}{n}(\Phi-id)^n$.
	\end{proposition}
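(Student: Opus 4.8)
The plan is to prove that $\exp$ and $\log$ are well-defined maps between $\mathcal N_k(\mathcal O_{r,R}[[\mathbf z]])$ and $\mathcal A_k(\mathcal O_{r,R}[[\mathbf z]])$ and then that they are mutually inverse, the last step reducing to a one-variable formal identity. First I would record the filtration behaviour forced by the flatness hypotheses. If $X=a\partial_x+\sum_i b_i\partial_{z_i}$ is $k$-flat then, since $\partial_x$ preserves the $\mathfrak m$-adic filtration while $\partial_{z_i}$ lowers it by one, the conditions $a\in\mathfrak m^{k}$ and $b_i\in\mathfrak m^{k+1}$ give $X(\mathfrak m^{j})\subseteq\mathfrak m^{j+k}$ for all $j$; dually, if $\Phi\in\mathcal A_k$ then $D:=\Phi-\mathrm{id}$ satisfies $D(\mathfrak m^{j})\subseteq\mathfrak m^{j+k}$, using that $\Phi$ is multiplicative and agrees with the identity on the generators modulo $\mathfrak m^{k+1}$. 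In both cases the $n$-th power lands in $\mathfrak m^{nk}$, so $\{X^n/n!\}$ and $\{(-1)^{n+1}D^n/n\}$ are summable by Proposition \ref{3} and Lemma \ref{summ}, and the two series define endomorphisms.

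Next I would check that $\exp$ takes values in $\mathcal A_k$. The $\mathbb C$-linearity is clear and $\exp(X)$ preserves $\mathfrak m$ because $X$ does, while the tangency condition is exactly the filtration statement above. Multiplicativity follows from the higher Leibniz rule $X^{n}(fg)=\sum_{j=0}^{n}\binom{n}{j}X^{j}(f)\,X^{n-j}(g)$, proved by induction on $n$: dividing by $n!$, summing, and regrouping by the Cauchy product yields $\exp(X)(fg)=\exp(X)(f)\,\exp(X)(g)$, the rearrangement being legitimate since all sums are finite modulo each $\mathfrak m^{i+1}$. Invertibility is then free, with inverse $\exp(-X)$, since $\exp(X)\circ\exp(-X)=\mathrm{id}$ by the same computation, so $\exp(X)\in\mathcal A_k$.

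The step I expect to be the main obstacle is showing that $L:=\log\Phi$ is a derivation for $\Phi\in\mathcal A_k$. The cleanest route is comultiplicative: writing $m\colon A\,\hat\otimes\,A\to A$ for the multiplication on $A=\mathcal O_{r,R}[[\mathbf z]]$, multiplicativity of $\Phi$ reads $\Phi\circ m=m\circ(\Phi\otimes\Phi)$, whence $\Phi^{n}\circ m=m\circ(\Phi\otimes\Phi)^{n}$ and, passing to the summable log-series, $L\circ m=m\circ\log(\Phi\otimes\Phi)$. Since $\Phi\otimes\mathrm{id}$ and $\mathrm{id}\otimes\Phi$ commute and are unipotent, $\log(\Phi\otimes\Phi)=L\otimes\mathrm{id}+\mathrm{id}\otimes L$, and evaluating $L\circ m=m\circ(L\otimes\mathrm{id}+\mathrm{id}\otimes L)$ on $f\otimes g$ is precisely the Leibniz rule $L(fg)=L(f)\,g+f\,L(g)$. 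The flatness of $L$ comes again from the filtration estimate, so $L\in\mathcal N_k$. The delicate point here is to set up the completed tensor product $A\,\hat\otimes\,A$ and to justify the summability of the relevant series on it; everything else is formal.

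Finally, mutual inversion reduces to the classical one-variable identities $\log(\exp T)=T$ and $\exp(\log(1+S))=1+S$ in $\mathbb Q[[T]]$ and $\mathbb Q[[S]]$. Modulo any $\mathfrak m^{i+1}$ the operator $X$ acts nilpotently, its $n$-th power vanishing once $nk>i$, so $\log(\exp X)\equiv X$ and $\exp(\log\Phi)\equiv\Phi$ follow by substituting into the corresponding truncated polynomial identities; taking the inverse limit over $i$ gives $\log\circ\exp=\mathrm{id}$ and $\exp\circ\log=\mathrm{id}$, which establishes that $\exp$ is a bijection with inverse $\log$.
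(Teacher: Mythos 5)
The paper offers no proof of this proposition at all --- it is quoted directly from Serre's book on complete filtered algebras --- so there is no internal argument to compare against. Your proof follows the standard route one would expect (and essentially the classical one): filtration estimates give summability of both series via Lemma \ref{summ} and Proposition \ref{3}; the higher Leibniz rule gives multiplicativity of $\exp(X)$; the completed-tensor-product (primitivity) argument shows $\log\Phi$ is a derivation; and mutual inversion reduces to the one-variable identities $\log(\exp T)=T$ and $\exp(\log(1+S))=1+S$ applied on each truncation, where the operators are nilpotent. That skeleton is sound, and the one delicate point you flag yourself --- setting up $A\,\hat\otimes\,A$ --- does work with the completed product filtration $F^p=\sum_{i+j\geq p}\mathfrak m^i\otimes\mathfrak m^j$. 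You also silently (and correctly) start the $\log$ series at $n=1$; the $n=0$ term in the statement is a typo.

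Two of your steps, however, hide genuine gaps. First, your assertion that ``the tangency condition is exactly the filtration statement above'' is false under the paper's literal definitions, and this is an off-by-one problem in the statement itself that your proof cannot simply absorb: a $k$-flat field only has $a\in\mathfrak m^{k}$, so your estimate yields $\exp(X)(x)\equiv x\bmod\mathfrak m^{k}$, whereas membership in $\mathcal A_k$ demands $\equiv x\bmod\mathfrak m^{k+1}$. Concretely, $X=z_1\partial_x$ is nilpotent and $1$-flat, yet $\exp(X)(x)=x+z_1\notin x+\mathfrak m^2$, so $\exp$ does not map $\mathcal N_1$ into $\mathcal A_1$ as defined. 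The bijection holds only after repairing the definitions, e.g.\ with the asymmetric tangency condition $\Phi(x)\equiv x\bmod\mathfrak m^{k}$, $\Phi(z_i)\equiv z_i\bmod\mathfrak m^{k+1}$ (or, alternatively, requiring $a\in\mathfrak m^{k+1}$ in the definition of $k$-flat); a correct proof must say this explicitly rather than claim the containment, which in general fails. (In the paper's actual applications the discrepancy is invisible, since the relevant automorphisms fix $x$ and the logarithms have no $\partial_x$-component.) Second, your derivation of $D(\mathfrak m^j)\subseteq\mathfrak m^{j+k}$ ``using that $\Phi$ is multiplicative and agrees with the identity on the generators'' is incomplete: $\mathcal O_{r,R}$ is not generated by $x$ as a $\mathbb C$-algebra, so tangency on $x,z_1,\dots,z_n$ alone does not control $D(f(x))$ for a general coefficient $f\in\mathcal O_{r,R}$. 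You need $\Phi(f(x))=f(x+u)$ with $u=\Phi(x)-x$, interpreted as the Taylor substitution $\sum_m \frac{f^{(m)}(x)}{m!}u^m$, and this must either be built into the notion of automorphism or deduced --- for instance by the pointwise division trick: $f-f(p)=(x-p)g$ in $\mathcal O_{r,R}$ forces $\Phi(f)\equiv f\bmod\mathfrak m$, and iterating with higher-order Taylor remainders $f-T_{p,N}f=(x-p)^N g_p$ pins down $\Phi(f)$ modulo each $\mathfrak m^{i+1}$. With these two repairs your argument is complete and is, in substance, the cited proof of Serre adapted to the ring $\mathcal O_{r,R}[[\mathbf z]]$.
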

	\subsection{The Lie Brackets and the Exponential map.}\label{decom}
	
	In this subsection, we list some results related to the exponential map which are going to be used later. After that, we show the connection between symmetries for a vector field and its centralizer. 
	
	In the next propositions, we consider $\Phi\in\mathcal A(\mathcal O_{r,R}[[\mathbf{z}]])$, $X,Y\in\mathcal N(\mathcal O_{r,R}[[\mathbf{z}]])$, and $Z,W\in\mathcal D(\mathcal O_{r,R}[[\mathbf{z}]])$.
	\begin{proposition}\label{27}
		The following properties hold
		
		\begin{enumerate}
			\item $\Phi^*[Z,W]=[\Phi^*Z,\Phi^*W]$, where $\Phi^*Z=\Phi\circ Z\circ \Phi^{-1}$.
			\item $[Z,fW]=f[Z,W]+Z(f)W$, for $f\in\mathcal O_{r,R}[[\mathbf{z}]]$.
			\item If $[X,Y]=0$, then $\exp (X+Y)=\exp X . \exp Y$.
		\end{enumerate} 
	\end{proposition}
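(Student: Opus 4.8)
The plan is to handle the three identities in order of increasing difficulty, since (1) and (2) are purely formal consequences of the definitions while (3) is where the summability machinery above is needed. For (1), I would first note that $\Phi^*Z=\Phi\circ Z\circ\Phi^{-1}$ is again a derivation — a one-line check using that $\Phi$ and $\Phi^{-1}$ are ring homomorphisms — so that $[\Phi^*Z,\Phi^*W]$ is meaningful; the identity is then the telescoping cancellation $[\Phi^*Z,\Phi^*W]=\Phi Z\Phi^{-1}\Phi W\Phi^{-1}-\Phi W\Phi^{-1}\Phi Z\Phi^{-1}=\Phi(ZW-WZ)\Phi^{-1}=\Phi^*[Z,W]$, using $\Phi^{-1}\Phi=\id$. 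For (2), I would evaluate both sides on an arbitrary $g$: the Leibniz rule for $Z$ applied to $f\cdot W(g)$ gives $Z((fW)(g))=Z(f)\,W(g)+f\,Z(W(g))$, and subtracting $(fW)(Z(g))=f\,W(Z(g))$ regroups the right-hand side as $Z(f)\,W(g)+f\,(ZW-WZ)(g)=(Z(f)W+f[Z,W])(g)$.

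For (3) the substantive point is to remain inside the nilpotent framework. First I would check that $\exp(X+Y)$ is even defined, i.e. that $X+Y\in\mathcal N(\mathcal O_{r,R}[[\mathbf{z}]])$; this is exactly where $[X,Y]=0$ is used. Commutativity makes the binomial expansion $(X+Y)^M=\sum_{k=0}^M\binom{M}{k}X^kY^{M-k}$ valid. Given a level $j$, the telescoping argument in the proof of Proposition \ref{3} supplies exponents $A,B$ with $X^A(\mathcal O_{r,R}[[\mathbf{z}]])\subset\mathfrak m^{j+1}$ and $Y^B(\mathcal O_{r,R}[[\mathbf{z}]])\subset\mathfrak m^{j+1}$; taking $M=A+B$, every summand has $k\ge A$ or $M-k\ge B$, and since $X,Y$ preserve $\mathfrak m^{j+1}$, each summand — hence $(X+Y)^M$ — maps $\mathcal O_{r,R}[[\mathbf{z}]]$ into $\mathfrak m^{j+1}$. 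As $\mathfrak m^j\subset\mathcal O_{r,R}[[\mathbf{z}]]$, this yields nilpotency of $X+Y$. With $\exp(X+Y)$ defined, I would prove the identity truncation by truncation: modulo $\mathfrak m^{j+1}$ all three exponential series collapse to finite sums, and commutativity lets me rearrange $\sum_m\frac{1}{m!}\sum_k\binom{m}{k}X^kY^{m-k}=\sum_{i,k}\frac{X^iY^k}{i!\,k!}=\exp X\circ\exp Y$ as an identity of finite sums. Since the two endomorphisms agree at every truncation level, they are equal.

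The main obstacle lies entirely in (3), and it is one of bookkeeping rather than of ideas: the algebraic core is the classical commuting-exponentials computation, but one must stay within the inverse-limit setting — confirming that $X+Y$ is nilpotent so that $\exp(X+Y)$ makes sense, and legitimizing the rearrangement of the double series. Both are resolved by passing to the truncations $J^{(j)}$, where Proposition \ref{3} guarantees that only finitely many terms survive and the classical argument applies verbatim.
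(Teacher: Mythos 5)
Your proof is correct and takes essentially the same approach as the paper, which disposes of (1) and (2) as direct consequences of the Lie bracket's definition and derives (3) from Newton's binomial --- precisely your telescoping, Leibniz, and commuting-binomial computations. Your additional care in checking that $X+Y\in\mathcal N(\mathcal O_{r,R}[[\mathbf{z}]])$ and in justifying the rearrangement of the double series truncation by truncation only makes explicit bookkeeping that the paper's one-line proof leaves implicit.
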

	
	\begin{proof}
		The properties $(1)$ and $(2)$ follow directly from the Lie Bracket's definition, and $(3)$ follows from the Newton's Binomial.
	\end{proof}
	
	\begin{remark}\label{5}
		Since $\exp 0=\id$ and $[X,-X]=0$, the item $(3)$ implies that $(\exp X)^{-1}=\exp -X$ and $\exp (nX)=\exp^n X$, for all $n\in\mathbb Z$.
	\end{remark}
	
	The two maps $\Ad_X,\ad_Z:\mathcal D(\mathcal O_{r,R}[[\mathbf{z}]])\to\mathcal D(\mathcal O_{r,R}[[\mathbf{z}]])$ defined by $$\ad_Z(W)=[Z,W],$$
	$$\Ad_X(W)=\exp X\circ W\circ (\exp X)^{-1},$$ are connected by the following well-known result.
	
	\begin{proposition}\label{9}
		For a fixed $t\in\mathbb C$, $\exp (ad_{tX})Z=\Ad_{tX}Z$.
	\end{proposition}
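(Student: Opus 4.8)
The plan is to prove the identity for all $t$ at once by showing that each side, viewed as a function of the parameter $t$, solves the same linear ordinary differential equation with the same value at $t=0$, and then to conclude by uniqueness. Because we are in a transversely formal setting, the main preliminary point is to make honest calculus in $t$ available, which I will do by descending to a fixed truncation level.

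First I would record that the left-hand side is well defined. The bracket of two derivations is again a derivation, so $\ad_X$ preserves $\mathcal D(\mathcal O_{r,R}[[\mathbf z]])$, and using the associative-algebra expansion
\[
\ad_X^k(Z)=\sum_{l=0}^{k}\binom{k}{l}(-1)^l\, X^{k-l}\circ Z\circ X^l,
\]
together with the nilpotency of $X$ (exactly as in the proof of Proposition \ref{3}), one checks that for each $j$ all but finitely many $\ad_X^k(Z)$ send $x,z_1,\dots,z_n$ into $\mathfrak m^{j+1}$. Hence $\left\{\frac{t^k}{k!}\ad_X^k(Z)\right\}_{k\in\mathbb N}$ is summable and $\exp(\ad_{tX})Z=\sum_{k\geq0}\frac{t^k}{k!}\ad_X^k(Z)$ is a well-defined derivation.

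Next I fix an integer $i\geq0$ and pass to the $i^{th}$ truncation. By Corollary \ref{7} the truncations of $\exp(tX)$ and $\exp(-tX)$ are polynomials in $t$, and by summability so is that of $\exp(\ad_{tX})Z$; thus both
\[
F(t):=\Ad_{tX}Z=\exp(tX)\circ Z\circ\exp(-tX),\qquad G(t):=\exp(\ad_{tX})Z
\]
are polynomial in $t$ at level $i$ and may be differentiated termwise. From $\frac{d}{dt}\exp(tX)=X\circ\exp(tX)=\exp(tX)\circ X$ (valid on the finite truncated sum, since $X$ commutes with its own powers) a direct computation gives $F'(t)=X\circ F(t)-F(t)\circ X=\ad_X(F(t))$, while shifting the summation index gives $G'(t)=\ad_X(G(t))$. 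Moreover $F(0)=Z=G(0)$.

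Finally, at level $i$ both $F$ and $G$ are polynomial-in-$t$ solutions of the constant-coefficient problem $H'(t)=\ad_X(H(t))$, $H(0)=Z$. Writing $D=F-G$, we have $D'=\ad_X(D)$ and $D(0)=0$, so by induction all Taylor coefficients of $D$ at $t=0$ vanish; being polynomial in $t$, $D$ is identically zero at level $i$. As $i$ is arbitrary, $F(t)=G(t)$, which is the asserted equality. I expect the only genuine obstacle to be the justification of differentiation in $t$ within the formal framework; this is precisely the role of Corollary \ref{7}, and once the computation is pushed to a single truncation it reduces to the classical Hadamard conjugation identity.
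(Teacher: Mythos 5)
Your proof is correct, but it takes a genuinely different route from the paper's. The paper proves, by induction on $n$ using Pascal's rule, the closed-form identity $\ad^n_XZ=\sum_{k=0}^n\binom{n}{k}X^k\circ Z\circ(-X)^{n-k}$, and the proposition then follows by summing and regrouping the terms of $\exp(tX)\circ Z\circ\exp(-tX)$; you instead fix a truncation level, where Corollary \ref{7} together with summability makes both $F(t)=\Ad_{tX}Z$ and $G(t)=\exp(\ad_{tX})Z$ polynomial in $t$, show that both solve $H'(t)=\ad_X(H(t))$ with $H(0)=Z$, and conclude by matching Taylor coefficients at $t=0$. This is the classical Lie-theoretic ODE argument, and it is legitimately available in this formal setting --- indeed the paper uses exactly the same ``truncations are polynomial in $t$'' device in the proof of Lemma \ref{10}, and your uniqueness step ($(m+1)D_{m+1}=\ad_X(D_m)$, $D_0=0$, hence $D\equiv0$ for a polynomial $D$) is sound. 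Two remarks. First, you invoke the expansion $\ad_X^k(Z)=\sum_{l=0}^{k}\binom{k}{l}(-1)^l X^{k-l}\circ Z\circ X^l$ merely to establish summability; but that expansion \emph{is} the paper's entire proof, so as written you assume the key lemma as known. The repair is easy: for summability you only need the much weaker fact, immediate by induction, that $\ad_X^k(Z)$ is an integer combination of compositions $X^a\circ Z\circ X^b$ with $a+b=k$, after which nilpotency of $X$ (and $Z(\mathfrak m^{j})\subset\mathfrak m^{j-1}$, $X(\mathfrak m^{j})\subset\mathfrak m^{j}$) kills all but finitely many terms at each truncation; you should state that weaker fact instead. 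Second, as to what each approach buys: the paper's induction is purely algebraic, requires no calculus in $t$, and yields the explicit formula at every order, while your argument avoids the binomial combinatorics and transfers unchanged to other conjugation identities, at the cost of the bookkeeping you correctly flag --- justifying termwise differentiation and the product rule at a fixed truncation, e.g.\ that $(F\circ X)^{(i)}=F^{(i)}\circ X$ and that left composition with $X$ respects congruence mod $\mathfrak m^{i+1}$.
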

	
	\begin{proof}
		We give a short proof for the sake of completeness. It is enough to show that $$\ad^n_XZ=\sum_{k=0}^n\binom{n}{k} X^k\circ Z \circ (-X)^{n-k}.$$
		
		We use induction to prove it. The base case $n=1$ is clear, let us assume that it is true for $n=m$. By hypothesis of induction \begin{align*}
			\ad^{m+1}_X Z&=\ad_X\left(\sum_{k=0}^m\binom{m}{k}X^{k}\circ Z\circ (-X)^{m-k}\right)\\
			&=\sum_{k=0}^m\binom{m}{k}X^{k+1}\circ Z\circ (-X)^{m-k}-\sum_{k=0}^m\binom{m}{k}X^{k}\circ Z\circ (-X)^{m-k}X\\
			&=\sum_{k=0}^m\binom{m}{k}X^{k+1}\circ Z\circ (-X)^{m-k}+\sum_{k=0}^m\binom{m}{k}X^{k}\circ Z\circ (-X)^{m-k+1},
		\end{align*}
		By the Pascal's triangle \begin{align*}
			\sum_{k=0}^m\binom{m}{k}X^{k+1}&\circ Z\circ (-X)^{m-k}+\sum_{k=0}^m\binom{m}{k}X^{k}\circ Z\circ (-X)^{m-k+1}=\\
			&=\sum_{k=0}^{m+1}\binom{m+1}{k}X^{k}\circ Z\circ (-X)^{m-k}.
		\end{align*}
	\end{proof}
	
	\begin{definition}
		A $\mathit{D_{r,R}}$-\textit{transversely formal symmetry} for $Z\in\mathcal D(\mathcal O_{r,R}[[\mathbf{z}]])$ is an automorphism $\Phi\in\mathcal A(\mathcal O _{r,R}[[\mathbf{z}]])$ such that $$\Phi\circ Z\circ\Phi ^{-1}=Z.$$
	\end{definition}
	
	We remark that we can similarly define a formal symmetry for a derivation in $\mathcal D(\mathbb C[[x,\mathbf{z}]])$. In addition, Proposition \ref{18} has its version for automorphisms and derivations over $\mathbb C[[x,\mathbf{z}]]$. Then, all the following results of this subsection have their respective statements over the formal objects.
	
	\begin{definition}\label{auto}
		A $D_{r,R}$-transversely formal automorphism $\Phi$ is said to be $\mathit{x}$\textit{-normalized} if it is $\mathcal O_{r,R}$-linear and has the form 
		$$\Phi(x,\mathbf{z})=\left( x,\sum_{i=1}^{n}a_{1i}z_i+\phi_1(x,\mathbf{z}),\dots,\sum_{i=1}^{n}a_{ni}z_i+\phi_n(x,\mathbf{z})\right),$$
		where $(a_{ij})_{n\times n}$ is an invertible constant matrix and $\phi_1,\dots,\phi_n\in\mathfrak m^2\subset\mathcal O_{r,R}[[\mathbf{z}]]$. We denote by $\mathcal{A}_{norm}(\mathcal O_{r,R}[[\mathbf{z}]])$ the set of such automorphisms.
	\end{definition}
	
	Supposing that the maps $\phi_1,\dots,\phi_n$ in Definition \ref{auto} lie in the respective rings $\mathcal O_{R}[[\mathbf{z}]]$ and $\mathbb C[[x,\mathbf{z}]]$, we define similarly $\mathcal{A}_{norm}(\mathcal O_{R}[[\mathbf{z}]])$ and $\mathcal{A}_{norm}(\mathbb C[[x,\mathbf{z}]])$.
	
	In the next result, we use the fact that any $x$-normalized automorphism $\Phi\in\mathcal A_{norm}(\mathcal O_{r,R}[[\mathbf{z}]])$ can be uniquely decomposed as $\Phi=A\circ\Psi$, where $A$ is an invertible linear change of coordinates in the indeterminates $x,z_1,\dots,z_n$, i.e., \begin{align*}
		Ax&=x,\\
		Az_i&=\sum_{j=1}^na_{ij}z_j,\quad a_{ij}\in\mathbb C
	\end{align*} 
	and $\Psi$ is a $D_{r,R}$-transversely formal automorphism tangent to the identity to order 1. Using Proposition \ref{18}, we can write $$\Phi=A\circ\exp Z,$$ for a uniquely determined $D_{r,R}$-transversely formal nilpotent vector field $Z$. We say that this decomposition is the \textit{exponential decomposition} of $\Phi$.
	
	\begin{lemma}\label{19}
		Let $X\in\mathcal D(\mathbb C[[x,\mathbf{z}]])$ be a linear derivation and $\Phi$ an $x$-normalized $D_{r,R}$-transversely automorphism. If $\Phi$ is a $D_{r,R}$-transversely formal symmetry for $X$, and $\Phi=A\circ \exp Z$ is the exponential decomposition of $\Phi$, then the automorphisms $A$ and $\exp Z$ are $D_{r,R}$-transversely formal symmetries for $X$. 
	\end{lemma}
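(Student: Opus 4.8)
The plan is to exploit that $X$ is linear, so it respects the $\mathbf{z}$-adic grading of $\mathcal{O}_{r,R}[[\mathbf{z}]]$, and to extract the lowest-order part of the symmetry identity in order to isolate the linear factor $A$. Concretely, I would rewrite the hypothesis $\Phi\circ X\circ\Phi^{-1}=X$ in the equivalent commutator form $\Phi\circ X=X\circ\Phi$, evaluate both sides on the transverse generators $z_i$, and compare homogeneous components of degree one in $\mathbf{z}$. I expect the only genuine subtlety to lie in this first step: one must be certain that $X$ preserves the filtration $\mathfrak{m}^{k}$, so that the higher-order tail of $\Phi$ cannot contaminate the degree-one component; this is exactly where the linearity of $X$ together with its tangency to the separatrix $\Gamma=\{\mathbf{z}=0\}$ enters.

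Since $\Phi$ is $x$-normalized it fixes $x$, and since $X$ is linear and tangent to $\Gamma$ we have $X(z_i)=\sum_{j}M_{ij}z_j$ for a constant matrix $M=(M_{ij})$, homogeneous of degree one, while $X(\mathfrak{m}^{k})\subset\mathfrak{m}^{k}$ for all $k$ (by Leibniz, from $X(\mathfrak{m})\subset\mathfrak{m}$). Writing $\Phi(z_i)=(Az)_i+\phi_i$ with $\phi_i\in\mathfrak{m}^{2}$, I would then compute the degree-one component of each side of $\Phi(X(z_i))=X(\Phi(z_i))$. On the left this component is $(MA\,z)_i$, since $\Phi(X(z_i))=\sum_j M_{ij}\bigl((Az)_j+\phi_j\bigr)$ and the $\phi_j$ contribute only in degree $\geq 2$. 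On the right, $X((Az)_i)=(AM\,z)_i$ is already homogeneous of degree one, whereas $X(\phi_i)\in\mathfrak{m}^{2}$ contributes nothing. Comparing yields $MA=AM$.

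Next I would translate $MA=AM$ back into the assertion that $A$ is a symmetry. The map $A\circ X\circ A^{-1}-X$ is a derivation, hence determined by its values on $x$ and on the $z_i$. It vanishes on $x$, because $A$ fixes $x$ and $X(x)\in\langle x\rangle$; and since $M$ commutes with $A$ it also commutes with $A^{-1}$, so $A\circ X\circ A^{-1}$ acts on $z_i$ through the matrix $AMA^{-1}=M$, i.e. exactly as $X$. Therefore $A\circ X\circ A^{-1}=X$, so $A$ is a $D_{r,R}$-transversely formal symmetry for $X$.

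Finally, for the tangential factor $\exp Z$, I would use that conjugation is multiplicative, $(A\circ\exp Z)^{*}=A^{*}\circ(\exp Z)^{*}$ in the notation $\Psi^{*}W=\Psi\circ W\circ\Psi^{-1}$ of Proposition \ref{27}. From $\Phi^{*}X=X$ together with $A^{*}X=X$ (which also gives $(A^{*})^{-1}X=X$) one obtains
$$(\exp Z)^{*}X=(A^{*})^{-1}\,\Phi^{*}X=(A^{*})^{-1}X=X,$$
so $\exp Z$ is a $D_{r,R}$-transversely formal symmetry for $X$ as well, completing the argument.
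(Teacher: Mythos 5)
Your proposal is correct and takes essentially the same route as the paper: the paper's proof simply notes that the linear part of $\Phi^*X$ is $A^*X$, deduces $A^*X=X$ from the linearity of $X$ and the symmetry hypothesis, and then gets $(\exp Z)^*X=(A^*)^{-1}\Phi^*X=X$ by multiplicativity of conjugation, exactly as in your last step. Your explicit degree-one computation producing $MA=AM$ is just an unpacking of the paper's one-line claim that conjugation by the tangent-to-identity factor does not affect the linear part.
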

	\begin{proof}
		
		Note that the linear part of $\Phi^*X$ is $A^*X$. As $X$ is linear and $\Phi$ is a symmetry for $X$, we have $A^*X=X$. On the other hand, $X=\Phi^*X=\exp Z^*(A^*X)=\exp Z^*X$, then $\exp Z$ is also a symmetry for $X$.  \end{proof} 
	
	For the following sequence of results, consider $X\in\mathcal N(\mathcal O_{r,R}[[\mathbf{z}]])$ and $Z\in\mathcal D(\mathcal O_{r,R}[[\mathbf{z}]])$.
	
	\begin{lemma}\label{10}
		Let $\exp X$ be a $D_{r,R}$-transversely formal symmetry for $Z$. Then, $\exp tX$ is a $D_{r,R}$-transversely formal symmetry for $Z$ for all $t\in \mathbb C$.
	\end{lemma}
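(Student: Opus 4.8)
The plan is to reduce everything to the adjoint action and to exploit that, at each truncation level, the map $t\mapsto \Ad_{tX}Z$ depends polynomially on $t$. First I would restate the hypothesis in terms of $\Ad$: saying that $\exp X$ is a symmetry for $Z$ means exactly $\Ad_X Z=Z$. By Remark \ref{5} we have $\exp(nX)=(\exp X)^n$ for every $n\in\mathbb Z$, so $\Ad_{nX}=\Ad_X^{\,n}$, and an immediate induction gives $\Ad_{nX}Z=Z$ for all $n\in\mathbb Z$. Thus the identity we want, $\Ad_{tX}Z=Z$, is already known on the whole integer lattice; it remains only to upgrade it to every $t\in\mathbb C$.

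The key step is to show that for each $i\in\mathbb N$ the $i^{\text{th}}$-truncation of $\Ad_{tX}Z$ is polynomial in $t$. By Proposition \ref{9} we have $\Ad_{tX}Z=\exp(\ad_{tX})Z=\sum_{k\ge0}\tfrac{t^k}{k!}\ad_X^kZ$, and writing this as $(\Ad_{tX}Z)(f)=\exp(tX)\big(Z(\exp(-tX)f)\big)$ for $f\in\mathcal O_{r,R}[[\mathbf z]]$, I would invoke Corollary \ref{7}: modulo $\mathfrak m^{i+1}$ each application of $\exp(\pm tX)$ contributes only finitely many powers of $t$, with coefficients in $\mathcal O_{r,R}[[\mathbf z]]$. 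Since $\exp(tX)$ preserves $\mathfrak m^{i+1}$, the truncation at level $i$ depends only on the inner factor $Z(\exp(-tX)f)$ modulo $\mathfrak m^{i+1}$; and because the derivation $Z$ lowers the $\mathfrak m$-adic order by at most one, that in turn depends only on $\exp(-tX)f$ modulo $\mathfrak m^{i+2}$, which is again polynomial in $t$ by Corollary \ref{7} at level $i+1$. Composing these finitely many polynomial expressions shows that $\Ad_{tX}Z\bmod\mathfrak m^{i+1}$ is a polynomial in $t$ whose coefficients are truncated derivations.

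With polynomiality in hand the conclusion is immediate. For fixed $i$, the polynomial $t\mapsto \Ad_{tX}Z\bmod\mathfrak m^{i+1}$ agrees with the constant value $Z\bmod\mathfrak m^{i+1}$ at the infinitely many points $t\in\mathbb Z$ established in the first step; hence it is identically equal to that constant. As this holds for every truncation level $i$, and an element of $\mathcal D(\mathcal O_{r,R}[[\mathbf z]])$ is determined by all of its truncations through the inverse-limit construction, I conclude $\Ad_{tX}Z=Z$ for all $t\in\mathbb C$; that is, $\exp(tX)$ is a $D_{r,R}$-transversely formal symmetry for $Z$.

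I expect the main obstacle to be the second step, namely establishing the polynomial dependence on $t$ cleanly. One must track how the derivation $Z$ interacts with the truncation when sandwiched between $\exp(tX)$ and $\exp(-tX)$, and make sure the finite-order bookkeeping (via the nilpotency of $X$, as encoded in Corollary \ref{7}) is uniform enough that only finitely many powers of $t$ survive modulo $\mathfrak m^{i+1}$. Once that is secured, the polynomial-identity argument and the passage to the inverse limit are routine.
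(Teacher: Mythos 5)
Your proof is correct and follows essentially the same route as the paper's: symmetry at integer times via Remark \ref{5} and telescoping, polynomiality in $t$ of each truncation of $\exp(tX)\circ Z\circ\exp(-tX)$ via Corollary \ref{7}, and the polynomial-identity principle applied truncation by truncation. The only difference is cosmetic --- you phrase the argument through $\Ad$ and spell out the $\mathfrak m$-adic bookkeeping behind the polynomiality, which the paper's proof leaves implicit.
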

	\begin{proof}
		The following argument is based on the proof of Lemma 2.4.9 \cite{diaw2019geometrie}. By Corollary \ref{7}, we know that for all $i\in\mathbb N$ the $i^{th}$-truncation of $\exp tX$ is polynomial on t, so the map $$P_k(t,x,\mathbf{z})=(\exp tX \circ Z\circ\exp -tX-Z)\mod\mathfrak m ^{k+1}$$ is as well. 
		
		By the Remark \ref{5}, $\exp nX=(\exp X)^n$ for all $n\in \mathbb N$. All these facts together imply that
		\begin{align*}
			\exp nX \circ Z\circ\exp -nX&=\exp ^nX \circ Z\circ(\exp X)^{-n} \\
			&=\exp ^{n-1}X \circ(\exp X\circ Z \circ(\exp X)^{-1})\circ(\exp X)^{-n+1} \\
			&=\exp ^{n-1}X \circ Z\circ(\exp X)^{-n+1} \\
			&\quad \vdots \\&=Z.
		\end{align*}
		
		In other words, for all $n, k\in \mathbb N$, the polynomial $P_k(n,x,\mathbf{z})$ vanishes. As consequence, $P(t,x,\mathbf{z})=0$, therefore, $\exp tX \circ Z\circ\exp -tX=Z$ for all $t\in\mathbb C$. The reciprocal is obvious.
	\end{proof}
	\begin{lemma}\label{11}
		For all $t\in \mathbb C$, the exponential map $\exp tX$ is a $D_{r,R}$-transversely formal symmetry for $Z$ if only if $[X,W]=0$.
	\end{lemma}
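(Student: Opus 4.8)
The plan is to read the bracket criterion in the statement as the vanishing of $[X,Z]$, the commutator of the nilpotent field $X$ with the derivation $Z$ under consideration (the letter $W$ appearing in the statement being a misprint for $Z$, the only derivation in play here), and to obtain the equivalence directly from Proposition \ref{9}. The point is that $\exp tX$ being a symmetry for $Z$ means exactly
\[
\Ad_{tX}Z=\exp tX\circ Z\circ(\exp tX)^{-1}=Z,
\]
and Proposition \ref{9} rewrites the left-hand side as $\exp(\ad_{tX})Z$. So the whole lemma amounts to analysing when $\exp(\ad_{tX})Z=Z$ holds for every $t\in\mathbb C$.

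For the implication $[X,Z]=0\Rightarrow\exp tX$ is a symmetry, I would argue as follows. If $\ad_XZ=[X,Z]=0$, then $\ad_{tX}^kZ=t^k\ad_X^kZ=0$ for all $k\geq1$, so the defining series of $\exp(\ad_{tX})Z$ collapses to its zeroth term $Z$. By Proposition \ref{9} this gives $\Ad_{tX}Z=Z$ for every $t$, that is, $\exp tX$ is a $D_{r,R}$-transversely formal symmetry for $Z$, for all $t\in\mathbb C$.

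For the converse, suppose $\exp tX$ is a symmetry for all $t$, so that $\Ad_{tX}Z=Z$ for all $t\in\mathbb C$. The key observation is that, for each $i$, the $i^{th}$-truncation of $\Ad_{tX}Z=\exp tX\circ Z\circ\exp(-tX)$ is a polynomial in $t$: indeed, by Corollary \ref{7} the truncations of both factors $\exp(\pm tX)$ are polynomial in $t$, and truncating a composition of such maps preserves this property. By Proposition \ref{9} this truncated polynomial is the truncation of $\exp(\ad_{tX})Z=Z+t\,\ad_XZ+\tfrac{t^2}{2}\ad_X^2Z+\cdots$. Since it equals the constant $Z\bmod\mathfrak m^{i+1}$ for every $t\in\mathbb C$, all its coefficients of positive degree in $t$ must vanish; in particular the linear coefficient $\ad_XZ=[X,Z]$ vanishes modulo $\mathfrak m^{i+1}$. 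As $i$ is arbitrary, the inverse-limit description of $\mathcal O_{r,R}[[\mathbf z]]$ yields $[X,Z]=0$.

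The step requiring the most attention is the polynomiality in $t$ of the truncations of $\Ad_{tX}Z$ used in the converse, which is what licenses the coefficient-extraction argument and parallels the polynomial-vanishing device already used in the proof of Lemma \ref{10}. Granting this, both directions are immediate from Proposition \ref{9}; I would only need to double-check that the summability making $\exp(\ad_{tX})$ well-defined is supplied by the nilpotency of $X$, which is precisely what Proposition \ref{9} already presupposes.
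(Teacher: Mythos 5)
Your proposal is correct and takes essentially the same route as the paper: both reduce the lemma via Proposition \ref{9} to the expansion $\Ad_{tX}Z=\exp(\ad_{tX})Z=Z+t[X,Z]+\tfrac{t^2}{2}[X,[X,Z]]+\cdots$, with the direction $[X,Z]=0\Rightarrow$ symmetry immediate (the paper's ``reciprocal is obvious'') and the converse obtained by extracting the linear coefficient in $t$ --- the paper does this by differentiating at $t=0$, while you justify the same extraction slightly more carefully through the polynomiality of the truncations in $t$, the device already used in the proof of Lemma \ref{10}. You also correctly read the $W$ in the statement as a misprint for $Z$.
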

	\begin{proof}
		Assume that the $\exp tX$ is a $D_{r,R}$-transversely formal symmetry. The Proposition \ref{9} implies
		$$Z=\Ad_{tX} Z=\exp (\ad_{tX})Z \\
		=(Z+t[X,Z]+\frac{t^2}{2}[X,[X,Z]]+\dots).$$
		Taking the derivative on $t$ and evaluating on zero, we find $[X,Z]=0$. The reciprocal is obvious.
	\end{proof}
	
	\begin{proposition}\label{20}
		The map $\exp X$ is a $D_{r,R}$-transversely formal symmetry for $Z$ if only if $X$ commutes with $Z$, i.e. $[X,Z]=0.$
	\end{proposition}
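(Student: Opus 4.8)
The plan is to deduce the biconditional directly by chaining Lemma \ref{10} and Lemma \ref{11}, which together already isolate the two nontrivial ingredients. Since the statement is an equivalence, I would organize the argument into the two implications and check that each reduces to an application of one or both of the preceding lemmas, with essentially no computation left to perform.

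For the direction $[X,Z]=0\Rightarrow\exp X\text{ is a symmetry}$, I would simply invoke Lemma \ref{11}: the vanishing of the bracket is equivalent to $\exp tX$ being a $D_{r,R}$-transversely formal symmetry for $Z$ for \emph{every} $t\in\mathbb{C}$. Specializing to $t=1$ then yields that $\exp X$ is a symmetry for $Z$, which is exactly what is wanted.

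For the converse, the only subtlety is that the hypothesis provides the symmetry property at $t=1$ alone, whereas Lemma \ref{11} demands it for all $t$. This gap is precisely what Lemma \ref{10} closes: from $\exp X$ being a symmetry for $Z$, Lemma \ref{10} promotes the property to $\exp tX$ for all $t\in\mathbb{C}$ (the mechanism being that each truncation of $\exp tX$ is polynomial in $t$ by Corollary \ref{7}, so a polynomial identity holding at all integers $t=n$ must hold identically). Once the symmetry holds for all $t$, Lemma \ref{11} applies and returns $[X,Z]=0$.

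I expect no genuine obstacle here, since Lemmas \ref{10} and \ref{11} carry the full weight of the argument; the point requiring care is purely the logical bookkeeping, namely recognizing that the ``for all $t$'' quantifier needed to feed Lemma \ref{11} in the nontrivial direction is supplied exactly by Lemma \ref{10}. I would therefore keep the proof to a few lines, citing the two lemmas in the correct order.
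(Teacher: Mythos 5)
Your proposal is correct and follows essentially the same route as the paper, which likewise proves Proposition \ref{20} by chaining Lemma \ref{10} (promoting the symmetry at $t=1$ to all $t\in\mathbb{C}$) with Lemma \ref{11} (the equivalence of the all-$t$ symmetry with $[X,Z]=0$). Your logical bookkeeping of the quantifier over $t$ is exactly the point of the paper's two-line argument, so nothing is missing.
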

	
	\begin{proof}
		By Lemma \ref{10}, $\exp X$ is a $D_{r,R}$-transversely formal symmetry for $Z$ if only if for all $t\in\mathbb C$, $\exp tX$ is also a $D_{r,R}$-transversely formal symmetry for $Z$. By Lemma \ref{11}, the $\exp tX$ is a $D_{r,R}$-transversely formal symmetry for $Z$ for all $t\in\mathbb C$ if only if $[X,Z]=0$.
	\end{proof}
	
	\subsection{Normal Form of $x$-normalized $D_R$-transversely Formal Vector Fields.}\label{3.3}
	
	In this subsection, we adapt the Normal Formal Theory of Poincaré (see for instance \cite{SB_1980-1981__23__55_0}) to our present setting of $D_{R}$-transversely formal vector fields. 
	
	We denote by $\mathcal D_{norm}(\mathcal O_{R}[[\mathbf{z}]])$ the set of $x$-normalized derivations in $\mathcal D(\mathcal O_R[[\mathbf{z}]])$. We recall that an element $X\in\mathcal D_{norm}(\mathcal O_{R}[[\mathbf{z}]])$ can be written in the following form $$x\partial_x+\sum_{i=1}^{n}\sum_{j=1}^{n}a_{ij}z_j\partial_{z_i}+\sum _{i=1}^{n}b_i(x,\mathbf{z})\partial_{z_i},$$
	where $a_{ij}\in\mathbb C$ and $b_i(x,\mathbf{z})\in\mathfrak m^2\subset \mathcal O_{R}[[\mathbf{z}]]$.
	
	In the next proof, we use the graded lexicographic order $>_{gr.lex}$ on $\mathbb Z ^n$ defined as follows. For $K=(k_1,\dots,k_n)$ and $L=(l_1,\dots,l_n)$ two $n$-uples in $\mathbb Z^{n}$, we write $K>_{gr.lex}L$ if either $k_1+\dots+k_n=:|K|>|L|:=l_1+\dots+l_n$, or $|K|=|L|$ and $(k_1,\dots,k_n)>_{lex}(l_1,\dots,l_n)$. We say that $K$ is greater than $L$ when $K>_{gr.lex} L$.
	
	In order to simplify the notation, we denote by $L(\lambda)$ the diagonal vector field $\sum_{i=1}^n\lambda_iz_i\partial_{z_i}$, where $\lambda=(\lambda_1,\dots,\lambda_n)\in\mathbb C^n$. 
	\begin{proposition}\label{12}
		For any $X\in\mathcal D_{norm}(\mathcal O_R[[\mathbf{z}]])$ with $1,\mu_{1},\dots,\mu_{n}$ as eigenvalues there exists an $x$-normalized $D_R$-transversely formal change of coordinates which conjugates $X$ to a vector field of the form
		\begin{equation}\label{eq1}
			x\partial_x+L(\mu)+\sum_{i=2}^{n}\epsilon_iz_i^{-1}z_{i-1}L(e_i)+\sum_{K}x^{-\langle\mu,K\rangle}\mathbf{z}^KL(\lambda_{K}),
		\end{equation}
		where $\mu=(\mu_{1},\dots,\mu_{n})$, $\epsilon_i\in\{0,1\}$ is nonzero only if $\mu_i= \mu_{i-1}$, the last sum on the right-hand side is taken over all indices $K\in\mathcal L_{n,1}$ such that $\langle\mu,K\rangle\in\mathbb Z_{\leq 0}$.
	\end{proposition}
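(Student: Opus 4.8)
\section*{Proof proposal}

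The plan is to carry out a Poincar\'e--Dulac reduction adapted to $D_R$-transversely formal fields, eliminating the non-resonant terms order by order in the $\mathbf z$-adic filtration. First, using the exponential decomposition $\Phi=A\circ\exp Z$ recalled just before Lemma \ref{19}, I would apply a linear $x$-normalized change of coordinates $A$ (acting only on $z_1,\dots,z_n$ and fixing $x\partial_x$) to put the linear part of $X$ into Jordan form. After this conjugation $X=S+N+B$, where $S=x\partial_x+L(\mu)$ is the semisimple part, $N=\sum_{i=2}^{n}\epsilon_iz_i^{-1}z_{i-1}L(e_i)=\sum_{i=2}^{n}\epsilon_iz_{i-1}\partial_{z_i}$ is the nilpotent Jordan part with $\epsilon_i\neq0$ only when $\mu_i=\mu_{i-1}$ (so that $[S,N]=0$), and $B=\sum_ib_i\partial_{z_i}$ with $b_i\in\mathfrak m^2$. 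Giving the monomial field $f(x)\mathbf z^K\partial_{z_j}$ the weight $|K|-1$, the field $S+N$ has weight $0$ while $B$ has weight $\geq1$.

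Next I would normalize weight by weight. Suppose inductively that, after $x$-normalized conjugations, all terms of $X$ of weight $<p$ already lie in $(\ref{eq1})$, and let $R_p$ be its weight-$p$ part. On the space $V_p$ of weight-$p$ fields, $\ad_S$ acts monomial-wise on the coefficient of $\mathbf z^K\partial_{z_j}$ through the operator $L_{K,j}\colon f\mapsto xf'+(\langle\mu,K\rangle-\mu_j)f$ on $\mathcal O_R$. Expanding $f=\sum_mf_mx^m$ gives $L_{K,j}(f)=\sum_m(m+\alpha)f_mx^m$ with $\alpha=\langle\mu,K\rangle-\mu_j$, so $L_{K,j}$ is diagonal in the basis $\{x^m\}$; it is invertible on $\mathcal O_R$ (with no small divisors, since $|m+\alpha|$ is bounded below and tends to $\infty$, whence the solution stays analytic on $D_R$) exactly when $\mu_j-\langle\mu,K\rangle\notin\mathbb Z_{\geq0}$, and otherwise has one-dimensional kernel and cokernel spanned by $x^{m_0}$, where $m_0:=\mu_j-\langle\mu,K\rangle$. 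Consequently $V_p=\ker\ad_S\oplus\mathrm{im}\,\ad_S$, and the surviving residue in each resonant slot is the single monomial $c\,x^{m_0}\mathbf z^K\partial_{z_j}$ with $c\in\mathbb C$; writing $K'=K-e_j\in\mathcal L_{n,1}$ one has $m_0=-\langle\mu,K'\rangle$ and $x^{m_0}\mathbf z^K\partial_{z_j}=x^{-\langle\mu,K'\rangle}\mathbf z^{K'}L(e_j)$, which is precisely one summand of $\sum_{K'}x^{-\langle\mu,K'\rangle}\mathbf z^{K'}L(\lambda_{K'})$ in $(\ref{eq1})$. Here the index set $\mathcal L_n$, allowing one entry equal to $-1$, is exactly what packages both the diagonal resonances ($k_j\geq1$) and the non-diagonal ones ($k_j=0$, giving $k'_j=-1$).

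Since $[S,N]=0$, the operator $\ad_N$ commutes with $\ad_S$ and preserves the decomposition $V_p=\ker\ad_S\oplus\mathrm{im}\,\ad_S$; as $\ad_N$ is nilpotent and $\ad_S$ is invertible on $\mathrm{im}\,\ad_S$, the full operator $\ad_{S+N}$ is invertible on $\mathrm{im}\,\ad_S$, a fact I would make explicit by ordering the finitely many weight-$p$ monomials with the graded lexicographic order $>_{gr.lex}$, with respect to which $\ad_S$ is diagonal and $\ad_N$ is strictly triangular. I then choose $W_p\in\mathrm{im}\,\ad_S$ with $\ad_{S+N}(W_p)=\mathrm{proj}_{\mathrm{im}\,\ad_S}(R_p)$; by Proposition \ref{9} the conjugation $(\exp W_p)^*X=\exp(\ad_{W_p})X$ leaves the weights $<p$ untouched and replaces $R_p$ by $\mathrm{proj}_{\ker\ad_S}(R_p)$, i.e. by its normal-form residues. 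Each $\exp W_p$ is tangent to the identity to order $\geq p$, so $\{\exp W_p\}_{p\geq1}$ is summable (Lemma \ref{summ}, with Proposition \ref{18} producing each $\exp W_p$), and the infinite composition with $A$ yields the desired $x$-normalized $D_R$-transversely formal change of coordinates conjugating $X$ to $(\ref{eq1})$.

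The main obstacle is the resonant step: one must verify that the cokernel of $L_{K,j}$ is the single line $\mathbb C\,x^{-\langle\mu,K-e_j\rangle}$, which is what forces the rigid coefficients $x^{-\langle\mu,K\rangle}$ in $(\ref{eq1})$ rather than arbitrary functions in $\mathcal O_R$, and that the $\mathcal L_n$-bookkeeping records all resonant residues together with the Jordan term (which sits in $\mathcal L_n\setminus\mathcal L_{n,1}$, i.e. in weight $0$). Ensuring simultaneously that every homological solution remains in $\mathcal O_R$ and that $\ad_N$ does not spoil the triangular elimination is the delicate part; convergence in $\mathbf z$ never enters, since the conclusion is only transversely formal.
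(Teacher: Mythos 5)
Your proposal is correct and is essentially the paper's own argument: both perform a Poincar\'e--Dulac elimination by conjugating with exponentials of monomial vector fields and solving the same homological equation, which is diagonal in the basis $\{x^m\}$ of $\mathcal O_R$ with eigenvalues $m+\langle\mu,K\rangle-\mu_j$ and hence invertible with no small divisors off the resonances, leaving exactly the residues $x^{-\langle\mu,K\rangle}\mathbf z^K L(\lambda_K)$. The only difference is bookkeeping: you normalize weight by weight via the splitting $V_p=\ker\ad_S\oplus\mathrm{im}\,\ad_S$ and the commuting-nilpotent argument for $\ad_N$, whereas the paper removes one nonresonant term at a time, choosing the smallest pair $(K_0,j_0)$ in the graded lexicographic order so that all $\ad_N$-contributions are absorbed into the strictly higher-order rest term $\mathcal R$.
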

	\begin{proof}
		Applying the usual Jordan Normal Theory, we can assume that the linear part of $X$ has the form
		$$X_{lin}=x\partial_x+L(\mu)+\sum_{i=2}^{n}\epsilon_iz_i^{-1}z_{i-1} L(e_i),$$
		where $\mu=(\mu_{1},\dots,\mu_{n})$, $\epsilon_i\in\{0,1\}$ is nonzero only if $\mu_i= \mu_{i-1}$.
		
		We can write the series expansion of $X$ as \begin{equation}
			X_{lin}+\sum_{K,j}g_{K,j}(x)\mathbf{z}^KL(e_j),\label{eq}
		\end{equation}
		where the last sum on the right-hand side is taken over all indices $j\in\{1,\dots,n\}$ and $K\in\mathcal L_{n,1}$, and $g_{K,j}\in\mathcal O_R$. We say that a nonzero term $g_{K,j}(x)\mathbf{z}^KL(e_j)$ in (\ref{eq}) is \textit{resonant} if $\langle \mu,K\rangle$ lies in $\mathbb Z_{\leq0}$ and $g_{K,j}(x)=\lambda_{K,j}x^{-\langle \mu,K\rangle}$, where $\lambda_{K,j}\in\mathbb C$.
		
		We eliminate the nonresonant terms by successive applications of automorphisms of the form 
		$$\Phi=\exp(f(x)\mathbf{z}^KL(e_j)),$$
		where $f\in\mathcal O_R$, $K\in\mathcal L_{n,1}$, and $j\in\{1,\dots,n\}$ are conveniently chosen.
		
		Indeed, we consider the smallest nonresonant term $g_{K_0,j_0}(x)\mathbf{z}^{K_0}L(e_{j_0})$ in (\ref{eq}) with respect to the pair $(K,j)$ and to the gr.lex order. The action of $\Phi=\exp{(f(x)\mathbf{z}^{K_0}L(e_{j_0}))}$ by conjugation on $X$ gives the expression
		\begin{align*}
			\Phi^*X=&X_{lin}+\sum_{(K,j)\leq (K_0,j_0)}\lambda_{K,j}x^{-\langle \mu,K\rangle}\mathbf{z}^KL(e_j)+\\&+(g_{K_0,j_0}(x)-(x\partial_x-\langle\mu,K_0\rangle)f(x))\mathbf{z}^{K_0}L(e_{j_0})+\mathcal{R},
		\end{align*}
		where the rest term $\mathcal R$ is a sum of vector fields $g_{K,j}\mathbf{z}^KL(e_j)$ with $(K,j)>_{gr.lex}(K_0,j_0)$.
		
		Writing the series expansion $g_{K_0,j_0}(x)=\sum_{i\neq\langle\mu,K_0\rangle}a_ix^i+a_{\langle\mu,K_0\rangle}x^{\langle\mu,K_0\rangle}$, with the convention that $a_{\langle\mu,K_0\rangle}=0$ if $\langle\mu,K_0\rangle\in\mathbb Z_{\leq0}$, we can define $f(x)=\sum_{i\neq\langle\mu,K_0\rangle}\frac{a_i}{i+\langle\mu,K_0\rangle}x^i$. As a result, we obtain a new vector field whose smallest nonresonant term is of order strictly greater than $(K_0,j_0)$.
	\end{proof}
	
	We say that the expression (\ref{eq1}) is a \textit{normal form} for $X$. 
	
	\begin{proposition}\label{26}
		Let $X, Y\in\mathcal D_{norm}(\mathcal O_{R}[[\mathbf{z}]])$ and $\Psi\in\mathcal A_{norm}(\mathbb C[[x,\mathbf{z}]])$. If $\Psi$ conjugates $X$ to $Y$, then $\Psi\in\mathcal{A}_{norm}(\mathcal O_{R}[[\mathbf{z}]])$.
	\end{proposition}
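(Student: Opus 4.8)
The plan is to determine $\Psi$ coefficient by coefficient in the $\mathbf z$-grading and to observe that at each degree the unknown $x$-dependent coefficients solve a first order linear ODE in $x$ that is regular-singular at the origin; for such an equation every formal solution is convergent on $D_R$ as soon as its right-hand side is, so convergence propagates through the induction. Concretely, I would first apply a linear change of coordinates in $\mathbf z$ to put the (common) linear part of $X$ and $Y$ in Jordan form, with eigenvalues $1,\mu_1,\dots,\mu_n$. The linear part $A$ of $\Psi$ already conjugates the linear part of $X$ to that of $Y$, so they share the spectrum $\mu=(\mu_1,\dots,\mu_n)$. Writing $\Psi$ as the substitution $x\mapsto x$, $z_i\mapsto\psi_i=\sum_j a_{ij}z_j+\phi_i$ with $\phi_i=\sum_{|K|\geq 2}c_{i,K}(x)\mathbf z^K$ and $c_{i,K}\in\mathbb C[[x]]$, the identity $\Psi\circ X=Y\circ\Psi$ evaluated on the coordinate $z_i$ becomes
$$\sum_l a_{il}\psi_l+b_i(x,\psi_1,\dots,\psi_n)=x\,\partial_x\psi_i+\sum_{k,l}a'_{kl}z_l\,\partial_{z_k}\psi_i+\sum_k b'_k\,\partial_{z_k}\psi_i,$$
where $a_{kl},a'_{kl}$ are the linear coefficients and $b_k,b'_k\in\mathfrak m^2\cap\mathcal O_R[[\mathbf z]]$ the higher-order parts of $X$ and $Y$.

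The heart of the argument is to collect the terms of a fixed degree $d=|K|\geq 2$. Since $b_i$ and $b'_k$ start at order $2$ in $\mathbf z$, the degree-$d$ parts of $b_i(x,\psi_1,\dots,\psi_n)$ and of $b'_k\,\partial_{z_k}\psi_i$ involve only the $c_{j,L}$ with $|L|<d$; being finite combinations of coefficients of $b_i,b'_k$ (which lie in $\mathcal O_R$) and of these lower-order $c_{j,L}$, they contribute a known term $g_{i,K}(x)\in\mathcal O_R$ by the inductive hypothesis. The remaining degree-$d$ terms are linear in the unknowns $c_{i,K}$ and, because the linear part is in Jordan form, are triangular with respect to the graded-lexicographic order; after transposing the already-solved off-diagonal (nilpotent) contributions to the right-hand side, each $c=c_{i,K}$ satisfies
$$x\,c'(x)-\nu\,c(x)=g(x),\qquad \nu=\mu_i-\langle\mu,K\rangle,\quad g\in\mathcal O_R.$$

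Comparing Taylor coefficients gives $(m-\nu)c_m=g_m$. If $\nu\notin\mathbb Z_{\geq0}$ then $m-\nu\neq0$ for every $m$, so $c_m=g_m/(m-\nu)$ is uniquely determined; since $|m-\nu|^{1/m}\to 1$ and only finitely many $m$ lie near $\nu$, the Hadamard formula gives $\limsup|c_m|^{1/m}=\limsup|g_m|^{1/m}\leq 1/R$, hence $c\in\mathcal O_R$. If $\nu=N\in\mathbb Z_{\geq0}$, the order-$N$ relation forces $g_N=0$, which holds automatically because a formal solution $\Psi$ is assumed to exist; the coefficient $c_N$ is then a free (given) constant contributing the convergent monomial $c_Nx^N$, and the remaining $c_m$ are controlled exactly as before. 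Carrying out this induction on $d$ shows every $c_{i,K}$ converges on $D_R$, i.e. $\phi_i\in\mathcal O_R[[\mathbf z]]$ and $\Psi\in\mathcal A_{norm}(\mathcal O_R[[\mathbf z]])$.

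The step I expect to require the most care is the bookkeeping of the triangular system produced by the Jordan blocks together with the resonant case $\nu\in\mathbb Z_{\geq0}$: one must verify that the solvability condition $g_N=0$ is indeed consistent with the existence of the given formal $\Psi$ and that the freedom in $c_N$ never obstructs convergence. The genuinely substantive observation, by contrast, is elementary and is the engine of the whole proof: the homological equation in the variable $x$ is a regular-singular first order linear ODE, and such an equation neither creates divergence nor shrinks the radius $R$.
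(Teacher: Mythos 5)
Your proof is correct, but it takes a genuinely different route from the paper's. The paper never writes out the conjugacy equation directly: it first applies Proposition \ref{12} to conjugate $X$ and $Y$, by $D_R$-transversely formal automorphisms $\Phi_1,\Phi_2$, to normal forms sharing the semisimple part $x\partial_x+L(\mu)$; by uniqueness of the Jordan decomposition, $\Psi_0=\Phi_2\circ\Psi\circ\Phi_1^{-1}$ is then a formal symmetry of $x\partial_x+L(\mu)$, and the exponential decomposition $\Psi_0=A\circ\exp Z$ together with Lemma \ref{19} and Proposition \ref{20} reduces everything to the homogeneous equation $[Z,x\partial_x+L(\mu)]=0$, i.e.\ $(x\partial_x+\langle\mu,K\rangle)b_K(x)=0$, whose formal solutions are the monomials $cx^{-\langle\mu,K\rangle}$ and hence trivially lie in $\mathcal O_R$. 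You instead solve the inhomogeneous regular-singular equation $xc'(x)-\nu c(x)=g(x)$ in a single induction, in effect merging the homological argument behind Proposition \ref{12} with the centralizer computation into one pass. The two points you flag do indeed go through: since membership in $\mathcal A_{norm}(\mathcal O_R[[\mathbf{z}]])$ only requires each coefficient $c_{i,K}$ to lie in $\mathcal O_R$ (the target ring is transversely formal, so no uniform estimates in $K$ are needed), your coefficientwise Hadamard bound suffices; the nilpotent couplings are strictly triangular, because the Jordan part of the source field sends the unknown $c_{i,K}$ into equations indexed by multi-indices strictly larger in lexicographic order while the target's Jordan part injects $c_{i-1,K}$ into the equation for $(i,K)$, so ordering the degree-$d$ unknowns by $(K,i)$ lets you solve them one at a time; and in the resonant case $\nu=N\in\mathbb Z_{\geq0}$ the compatibility $g_N=0$ is automatic because the given formal $\Psi$ satisfies the full equation --- correctly so, as the proposition carries no resonance hypothesis, and your free constants $c_N$ correspond exactly to the monomial symmetries $cx^{-\langle\mu,K\rangle}$ surviving in the paper's argument. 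What the paper's route buys is economy and reuse: the normal form, the exponential decomposition and the symmetry lemmas are precisely the tools redeployed in the proof of the Main Theorem, and the reduction additionally exhibits the conjugacy between normal forms as having monomial coefficients, which is sharper structural information. What your route buys is a self-contained, elementary argument that makes the analytic engine explicit: a regular-singular first-order ODE in $x$ neither creates divergence nor shrinks the radius $R$.
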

	
	\begin{proof}
		By Proposition \ref{12}, there exist $\Phi_1,\Phi_2\in\mathcal{A}_{norm}(\mathcal O_R[[\mathbf{z}]])$ which diagonalize the respective semisimple parts of $X$ and $Y$, in other words, we have the respective Jordan decompositions $$\Phi_1^*X=X_s+X_n=x\partial_x+L(\mu)+X_n,$$ $$\Phi_2^*Y=Y_s+Y_n=x\partial_x+L(\lambda)+Y_n.$$
		By hypothesis, there exists $\Psi\in\mathcal{A}_{norm}(\mathbb C[[x,\mathbf{z}]])$ such that $$\Psi^*X=Y,$$
		which implies $\mu=\lambda$. By the uniqueness of the Jordan decomposition, the automorphism $\Psi_0=\Phi_2\circ\Psi\circ\Phi_1^{-1}\in \mathcal{A}_{norm}(\mathbb C[[x,\mathbf{z}]])$ is such that $$\Psi_0^*(x\partial_x+L(\mu))=x\partial_x+L(\mu).$$
		It means that $\Psi_0$ is an $x$-normalized formal symmetry for $x\partial_x+L(\mu)$.
		
		Consider the exponential decomposition $A\circ \exp Z$ of $\Psi_0$. Since $\Psi_0$ is $x$-normalized, $Z$ has the form $\sum_{K\in\mathcal L_{n,1}} b_K(x)\mathbf{z}^KL(\lambda_K)$, where each $b_K\in\mathbb C[[x]]$. By Lemma \ref{19}, we know that $\exp Z$ is a symmetry for $x\partial_x+L(\mu)$. And by Proposition \ref{20}, we have that $$[Z,x\partial_x+L(\mu)]=0.$$ 
		Using the above expansion of $Z$, this equality is equivalent to state that $$(x\partial_x+\langle\mu,K\rangle )b_K(x)=0$$
		for all $K\in\mathcal L_{n,1}$. Writing $b_K(x)=\sum_{i=0}^{\infty}c_ix^i$, with $c_i\in\mathbb C$, we obtain for each $i\in\mathbb N$
		$$(i+\langle\mu,K\rangle )c_i=0.$$
		
		This implies that either $\langle\mu,K\rangle\notin\mathbb Z_{<0}$ and $b_K=0$ or else $b_K(x)=cx^{-\langle\mu,K\rangle}$ for some
		constant $c\in\mathbb C$. In other words, $Z$ is a $D_{R}$-transversely formal vector field, and consequently, $\exp Z$ is a $D_{R}$-transversely formal change of coordinates. In conclusion, $\Phi=\Phi_2^{-1}\circ\Psi_0\circ\Phi_1$ is a $D_R$-transversely formal automorphism.
	\end{proof}
	
	The property of no transverse negative resonance stated in the introduction
	can be reformulated as follows.
	\begin{definition}
		We say that a vector field $X\in\mathcal D(\mathcal O_{R}[[\mathbf{z}]])$ with $1,\mu_1,\dots,\mu_n$ as eigenvalues has no transverse negative resonance if $$\langle\mu,\mathcal L_{n,0}\rangle\cap\mathbb Z_{\geq 1}=\emptyset,$$
		where $\mu=(\mu_1,\dots,\mu_n)$.
	\end{definition}
	Now, we have all the tools to prove the $Theorem$ \ref{theo1}. Recall that it characterizes the center of an $x$-normalized $D_R$-transversely formal vector field with no transverse negative resonance on the set of $x$-normalized $D_{r,R}$-transversely formal vector fields.
	\begin{proof}[of Theorem \ref{theo1}]
		By Proposition \ref{12}, we can assume that $X$ has the form 
		$$X=x\partial_x+L(\mu)+\sum_{i=2}^n\epsilon_iz_i^{-1}z_{i+1} L(e_i)+\sum_{K}b_Kx^{-\langle\mu,K\rangle}\mathbf{z}^KL(\lambda_{K}).$$
		Since $Y$ is $x$-normalized, it can be expanded as $$Y=x\partial_x+\sum_{i=1}^{n}\sum_{j=1}^{n}c_{ij}z_j\partial_{z_i}+\sum_{l,K}d_{lK}x^l\mathbf{z}^KL(\lambda_{K}),$$
		where $l\in\mathbb Z$, $K\in\mathcal L_{n,1}$.
		
		As $Y$ commutes with $X$, it has to commute with its semisimple part $x\partial_x+L(\mu)$. This implies that for each $l\in\mathbb Z$, $K\in\mathcal L_{n,1}$, we must have 
		$$(l+\langle \mu,K\rangle)d_{lK}x^l\mathbf{z}^KL(\lambda_K)=[x\partial_x+L(\mu), d_{lK}x^l\mathbf{z}^KL(\lambda_{K})]=0.$$
		
		Since $X$ has no transverse negative resonance, both expressions can vanish only if $l\geq0$. This means all the monomials on the expansion of $Y$ have positive exponents on $x$. Consequently $Y\in\mathcal D_{norm}(\mathcal O_R[[\mathbf{z}]])$.
	\end{proof}
	
	\section{The Crossing Type Foliations}\label{sectionfinal}
	The first goal of this section is to define the holonomy for a crossing type foliation. Then, we show that it is always possible to construct an $x$-normalized automorphism that conjugates local generators of crossing type foliations on a neighborhood of $D_{r,R}$ if the conditions \ref{conda}, \ref{condb}, and \ref{condc} (see section \ref{intro}) are satisfied. Using these results, we prove that is always possible to extend this automorphism to a neighborhood of the origin. In other words, we prove the main result of this paper. After that, we show an application in dimension 3. 
	
	We will need the following well-known result.
	
	\begin{lemma}\label{lemma normal form}
		Let $X\in\mathcal D(\mathcal O_{R}\{\textbf{z}\})$ be a vector field having the form \begin{equation}\label{equation crossing type}
			X=x\partial_x+\sum_{i=1}^{n}\sum_{j=1}^{n}a_{ij}z_j\partial_{z_i}+\sum_{i=1}^{n}\sum_{j=1}^{n}b_{ij}(x)z_i\partial_{z_j},
		\end{equation}
		where $a_{ij}\in\mathbb C$, and $b_{ij}\in\mathcal O_R$ vanishes at $0\in\mathbb C$. If $X$ has no transverse negative resonance, then, possibly reducing the radius $R$, there exists an $x$-normalized change of coordinates which conjugates $X$ to a vector field with the form $$x\partial_x+\sum_{i=1}^{n}\sum_{j=1}^{n}a_{ij}z_j\partial_{z_i}.$$
	\end{lemma}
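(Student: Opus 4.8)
The plan is to use that the perturbation of $X$ is linear in the transverse variables, look for a change of coordinates of the same type, and reduce everything to a matrix ODE with a regular singular point. Collect the transverse part of $X$ into a single linear field: put $A:=(a_{ij})$ and $M(x):=(m_{ij}(x))$ with $m_{ij}(x)=a_{ij}+b_{ji}(x)$, so that $M(x)=A+N(x)$ with $N(0)=0$ (each $b_{ij}$ vanishes at $0$) and $X=x\partial_x+\sum_i(M(x)\mathbf z)_i\partial_{z_i}$. Let $\mu_1,\dots,\mu_n$ be the eigenvalues of $A$, so that $1,\mu_1,\dots,\mu_n$ are the eigenvalues of the linear part of $X$. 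I seek the change of coordinates in the form $\mathbf z=P(x)\mathbf w$, with $P(x)$ holomorphic and $P(0)=I$; such a map fixes $x$ and preserves $\Gamma=\{\mathbf z=0\}$, so it is of the required type. Computing $X(w_k)$ for $w_k=(P(x)^{-1}\mathbf z)_k$ shows that conjugating $X$ to $x\partial_x+\sum_{i,j}a_{ij}w_j\partial_{w_i}$ is equivalent to the equation
\begin{equation*}
xP'(x)=M(x)P(x)-P(x)A,
\end{equation*}
a linear system with a regular singular point at $x=0$; solving it is the whole content of the lemma.

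Next I would solve this equation formally, $P(x)=\sum_{m\ge0}P_mx^m$ with $P_0=I$. Writing $N(x)=\sum_{k\ge1}N_kx^k$ and equating the coefficients of $x^m$ for $m\ge1$ gives the recursion
\begin{equation*}
(m\,\id-\ad_A)(P_m)=\sum_{k=1}^{m}N_kP_{m-k},\qquad\text{where } \ad_A(P):=AP-PA.
\end{equation*}
The operator $\ad_A$ acting on $n\times n$ matrices has eigenvalues $\mu_i-\mu_j$, hence $m\,\id-\ad_A$ has eigenvalues $m-(\mu_i-\mu_j)$. For $i=j$ these equal $m\ne0$; for $i\ne j$ one has $\mu_i-\mu_j=\langle\mu,e_i-e_j\rangle$ with $e_i-e_j\in\mathcal L_{n,0}$, and since $X$ has no transverse negative resonance this value does not lie in $\mathbb Z_{\ge1}$, so $m-(\mu_i-\mu_j)\ne0$ for all $m\ge1$. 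Thus $m\,\id-\ad_A$ is invertible at every step, the $P_m$ are uniquely determined, and the formal solution exists and is unique; this non-vanishing is exactly where the hypothesis enters.

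It remains to prove convergence, which forces the reduction of $R$ and is the main difficulty. Because the eigenvalues of $m\,\id-\ad_A$ grow like $m$, one obtains a uniform bound $\|(m\,\id-\ad_A)^{-1}\|\le C_1$ for all $m\ge1$: the finitely many small indices are controlled by the no-resonance condition and the large ones decay like $1/m$ (in the non-diagonalizable case the commuting nilpotent part of $\ad_A$ contributes only a finite Neumann series and does not spoil the bound). Since $b_{ij}\in\mathcal O_R$, Cauchy estimates on a slightly smaller disc give $\|N_k\|\le C\rho^k$; substituting both estimates into the recursion yields $\|P_m\|\le C_1C\sum_{k=1}^m\rho^k\|P_{m-k}\|$, and a standard majorant argument bounds $\sum_m\|P_m\|t^m$ by a rational function with positive radius of convergence. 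Hence $P$ is holomorphic on some $|x|<R'$, and since $P(0)=I$ it is invertible there, so $\mathbf z=P(x)\mathbf w$ is a genuine analytic change of coordinates bringing $X$ to $x\partial_x+\sum_{i,j}a_{ij}w_j\partial_{w_i}$ after shrinking $R$ to $R'$. I expect this convergence estimate to be the only real obstacle: one must check that no transverse negative resonance simultaneously guarantees solvability at each order and inverse bounds strong enough to majorize the series.
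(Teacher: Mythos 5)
Your proof is correct and takes essentially the same route as the paper: the paper likewise rewrites $X$ as the linear system $\dot x = x$, $\dot{\mathbf z} = (A_0 + xA_1 + \dots)\mathbf z$, observes that no transverse negative resonance forces $\mu_i - \mu_j \notin \mathbb Z_{\geq 1}$ (no Poincaré resonances), and then simply cites Theorems 16.15 and 16.16 of Ilyashenko--Yakovenko, whose standard proof is exactly your formal recursion $(m\,\id - \ad_A)(P_m) = \sum_{k=1}^{m} N_k P_{m-k}$ together with the uniform bound on $(m\,\id - \ad_A)^{-1}$ and the majorant convergence argument. The only difference is that you prove the cited gauge-equivalence result from scratch, including the convergence estimate, rather than quoting it.
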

	\begin{proof}
		Let $1,\mu_1,\dots,\mu_n$ be the eigenvalues of $X$. The hypothesis of no transverse negative resonance implies that $\mu_i-\mu_j\notin\mathbb Z_{\geq1}$, for any $i\neq j$. Hence, the expansion (\ref{equation crossing type}) has no resonant term in the sense of Poincaré. Now, note that the vector field $X$ is associated to the following differential system $$\begin{cases}
			\Dot{x}&=x,\\
			\Dot{\textbf{z}}&=(A_0+xA_1+\dots)\textbf{z},
		\end{cases}$$
		where $A_0=(a_{ij})_{n\times n}$, and $A_i$ is a n-dimensional square complex matrix for all $i\in\mathbb Z_{>0}$. Then, by Theorems 16.15 and 16.16 in \cite{ilyashenko2008lectures}, there exists an x-normalized change of coordinates which conjugates $X$ to the vector field $x\partial_x+A_0\textbf{z}$.
	\end{proof}
	
	\begin{lemma}\label{16}
		Let $(\mathcal F , H, \Gamma)$ be a crossing type foliation which its local generators have no transverse negative resonance. Then, there exist local coordinates
		$$(x,\mathbf{z}) = (x, z_1,\dots,z_n)$$
		such that $\Gamma := \{\mathbf{z}=0\}$, $H :=\{x=0\}$. Moreover, once these coordinates are fixed, there exists a unique local generator $X$ for $\mathcal F$ that is an $x$-normalized $D_R$-transversely convergent vector field. 
	\end{lemma}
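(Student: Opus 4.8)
The plan is to obtain the coordinates and the distinguished generator in three moves---building coordinates adapted to the pair $(H,\Gamma)$, dividing an arbitrary generator by the unit dictated by the $\Gamma$-eigenvalue, and normalizing the resulting linear part with the preceding lemma---and then to read off uniqueness from the fact that the normalization pins down the $x$-coefficient.

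First I would use that $H$ is a smooth hypersurface and $\Gamma$ a smooth curve meeting it transversally at the origin: this is exactly the situation in which $x$ restricts to a coordinate on $\Gamma$ and one can complete to a coordinate system $(x,\mathbf{z})$ with $H=\{x=0\}$ and $\Gamma=\{\mathbf{z}=0\}$, the adapted coordinates recalled in the introduction. In such coordinates I would write an arbitrary local generator as $X_0=a(x,\mathbf{z})\partial_x+\sum_i c_i(x,\mathbf{z})\partial_{z_i}$. Invariance of $H=\{x=0\}$ forces $a(0,\mathbf{z})=0$, so $a=x\tilde a$ with $\tilde a$ analytic; invariance of $\Gamma=\{\mathbf{z}=0\}$ forces $c_i(x,0)=0$ for every $i$. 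Condition \ref{3.b}, the nonvanishing of the eigenvalue in the $\Gamma$-direction, is precisely the statement that $\tilde a(0,0)\neq0$, so $\tilde a$ is a unit; dividing $X_0$ by it produces a generator $X=x\partial_x+\sum_i d_i(x,\mathbf{z})\partial_{z_i}$ with $d_i(x,0)=0$, whose $\mathbf{z}$-linear part has the form $\sum_{i,j}b_{ij}(x)z_j\partial_{z_i}$ with each $b_{ij}\in\mathcal O_R$.

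To reach the $x$-normalized shape I would apply Lemma \ref{lemma normal form} to the $\mathbf{z}$-linear truncation $x\partial_x+\sum_{i,j}b_{ij}(x)z_j\partial_{z_i}$ of $X$; the no transverse negative resonance hypothesis is exactly what that lemma needs (it rules out $\mu_i-\mu_j\in\mathbb Z_{\geq1}$), and it yields a change of coordinates, linear in $\mathbf{z}$ and fixing $x$, that replaces the coefficients $b_{ij}(x)$ by the constant matrix $\big(b_{ij}(0)\big)$. Applying this same map to the full $X$, the crucial observation is that a $\mathbf{z}$-linear automorphism preserves the $\mathbf{z}$-degree: it normalizes the linear part to constant coefficients while sending the higher-order tail back into $\mathfrak m^2$, so the transformed field is an $x$-normalized vector field of the form $(\ref{eq2})$. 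The map fixes $x$ and is linear in $\mathbf{z}$, hence preserves both $H$ and $\Gamma$ and keeps the coordinates adapted, and $X$ remains a generator of $\mathcal F$. Since $X_0$ and the normalizing map are analytic, all coefficients converge on a common disk $D_R$ after possibly shrinking $R$, so the resulting $X$ is $D_R$-transversely convergent.

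Uniqueness is then immediate: any two local generators differ by multiplication by a unit $u$, and being $x$-normalized pins the $x$-coefficient to be exactly $x$; comparing the $x$-coefficients of $X$ and $uX$ forces $ux=x$, whence $u\equiv1$. I expect the one genuine point to watch to be the interaction in the third step between the linear normalization and the nonlinear tail---namely verifying that the $\mathbf{z}$-linear coordinate change sends the \emph{entire} vector field, not merely its linear truncation, into $x$-normalized form, and that it does so without disturbing the adapted structure.
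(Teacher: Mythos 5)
Your proposal follows essentially the same route as the paper: choose coordinates straightening $H$ and $\Gamma$, use invariance of both to factor the generator, divide by the unit that condition iii on the $\Gamma$-eigenvalue provides, and invoke Lemma \ref{lemma normal form} (via the no transverse negative resonance hypothesis) to reach the $x$-normalized form. Your two additions---the explicit check that the $\mathbf{z}$-linear normalizing map carries the full field, not just its linear truncation, into $x$-normalized form, and the uniqueness argument comparing $\partial_x$-coefficients of $X$ and $uX$---are correct refinements of points the paper leaves implicit.
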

	\begin{proof}
		We can choose coordinates $(x, \mathbf{z})$ such that $\Gamma := \{\mathbf{z}=0\}$, $H :=\{x=0\}$. The condition (\ref{2.b}) in the definition of crossing type foliation (see section \ref{intro}) implies that a local generator for $\mathcal F$ can be written in these coordinates
		as 
		$$Y_0=g(x,\mathbf{z})x\partial_x+\sum_{i=1}^nh_i(x,\mathbf{z})\partial_{z_i}$$
		where $g,h_1,\dots$, and $h_n$ do not have common factors and $h_1, \dots, h_n$ lie in the ideal $\mathfrak m$. The condition (\ref{3.b}) says that the linear part of $Y_0$ has a nonzero eigenvalue in the $\Gamma$-direction. Then $g(0)\neq0$, and $Y_1=g^ {-1} Y$ is a local generator of $\mathcal F$.
		
		By hypothesis, $Y_1$ has no transverse negative resonance, then, by Lemma \ref{lemma normal form}, there exists an $x$-normalized $D_R$-transversely convergent change of coordinates that conjugates $Y_1$ to $$X=x\partial_x+\sum_{i=1}^{n}\sum_{j=1}^{n}a_{ij}z_j\partial_{z_i}+\sum _{i=1}^{n}b_i(x,\mathbf{z})\partial_{z_i},$$
		where $(a_{ij})_{n\times n}$ is a constant matrix, and $b_i\in\mathfrak m^2\subset\mathcal O_R\{\mathbf{z}\}$ for $1\leq i\leq n$.
	\end{proof}
	
	We say that the coordinates given by this lemma are \textit{adapted} to the crossing type foliation $(\mathcal F , H, \Gamma)$ and that $X$ is the \textit{$\mathit{x}$-normalized local generator} for $\mathcal F$ in these adapted coordinates. 
	
	For any $R>0$, up to a constant rescaling in the $x$ variable, we can suppose that the $x$-normalized local generators are defined in a domain containing a neighborhood of $\overline{D_R\times \{\mathbf{0}\}}\subset\mathbb C^{n+1}$. Then, we fix once for all an constant $R>1$.
	
	In these adapted coordinates, the local $\Gamma$-holonomy can be computed by lifting the circular path $\{(e^{2\pi\theta},0);\theta \in[0,1]\}$ along the leaves of the foliation that goes through a small poli-disk $\mathbb D\subset\mathbb C^{n}$ transverse to $\Gamma^*$ at the point $(1,\mathbf{0})$. 
	
	\begin{definition}
		We say that $(\mathcal F , H, \Gamma)$ and $(\mathcal G , L, \Omega)$ are $\mathit{D_{r,R}}$\textit{-transversely equivalent} if there exist $0< r<1<R$, respective adapted coordinates $(x,\mathbf{z})$ and $(y,\mathbf{w})$, and a bianalytic map $\Psi$ between two open neighborhoods $U,V\subset\mathbb C^{n+1}$ of $D_{r,R}\times \{\mathbf{0}\}$ such that $\Psi$ conjugates the $x$-normalized and $y$-normalized local generators restricted to $U$ and $V$, and we can write $\Psi$ in the form $$(y,\mathbf{w})=\left( x,\sum_{i=1}^{n}a_{1i}z_i+\psi_1(x,\mathbf{z}),\dots,\sum_{i=1}^{n}a_{ni}z_i+\psi_n(x,\mathbf{z})\right),$$
		where $(a_{ij})_{n\times n}$ is an invertible constant matrix and $\psi_1,\dots,\psi_n\in\mathfrak m^2\subset\mathcal O_{r,R}[[\mathbf{z}]]$.
		
	\end{definition}
	We observe that $\Psi$ is an $x$-normalized $D_{r,R}$-transversely automorphism (see Definition \ref{auto}).
	
	Even though the next result is a simple application of Theorem 2 chapter IV \cite{camacho2013geometric}, we are going to give a prove of the new fact that the map which conjugates the local generators is $x$-normalized. We refer to \cite{camacho1977introduccao} for further details.
	\begin{proposition}\label{31}
		Let $(\mathcal F,H,\Gamma)$ and $(\mathcal G,L,\Omega)$ be two crossing type foliations with respective local generators $X$ and $Y$ verifying \ref{conda}, \ref{condb}, and \ref{condc} (see section \ref{intro}). Then, the crossing type foliations are $D_{r,R}$-transversely equivalent.
	\end{proposition}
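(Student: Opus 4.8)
The plan is to obtain the conjugating map in two stages: first invoke the classical holonomy-spreading construction to produce \emph{some} bianalytic conjugacy on a neighborhood of the annulus $D_{r,R}\times\{\mathbf 0\}$, and then show that, thanks to hypotheses \ref{conda}--\ref{condc}, this conjugacy can be arranged to be $x$-normalized. For the first stage, let $\phi\colon(\mathbb D,\mathbf 0)\to(\mathbb D',\mathbf 0)$ be the germ of biholomorphism conjugating the $\Gamma$-holonomy $h_X$ to the $\Omega$-holonomy $h_Y$ furnished by hypothesis \ref{condc}, so that $\phi\circ h_X=h_Y\circ\phi$. Covering the loop $\theta\mapsto(e^{2\pi i\theta},\mathbf 0)$ by finitely many foliated charts (flow-boxes) and transporting $\phi$ along the leaves from transversal to transversal, Theorem~2, Chapter~IV of \cite{camacho2013geometric} yields a bianalytic map $\Psi$ between neighborhoods $U,V$ of $D_{r,R}\times\{\mathbf 0\}$ which sends the leaves of $\mathcal F$ to the leaves of $\mathcal G$ and $\Gamma^*$ to $\Omega^*$.

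The new point is the $x$-normalized shape of $\Psi$, and this is where I would use that both local generators are $x$-normalized. Since the $x$-component of $X$ and of $Y$ is exactly $x\partial_x$ (resp. $y\partial_y$), the first coordinate evolves identically along the leaves of both foliations; concretely, the complex-time flows $\Phi^{\mathcal F}_{t}$ and $\Phi^{\mathcal G}_{t}$ multiply the respective first coordinate by $e^{t}$. I would therefore realize the transport explicitly by setting, over the transversal $\{x=e^{2\pi i\theta}\}$,
$$\Psi_\theta=\Phi^{\mathcal G}_{2\pi i\theta}\circ\phi\circ\Phi^{\mathcal F}_{-2\pi i\theta},$$
which keeps the level $y=x=e^{2\pi i\theta}$ fixed. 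Gluing the $\Psi_\theta$ gives a map $\Psi(x,\mathbf z)=(x,\Psi_{\mathbf z}(x,\mathbf z))$ with $\Psi_{\mathbf z}(x,\mathbf 0)=\mathbf 0$, and it is single-valued on the full annulus precisely because $\phi\circ h_X=h_Y\circ\phi$ matches the values at $\theta=0$ and $\theta=1$. Thus $\Psi$ fixes $x$ and preserves the separatrix, and writing $\Psi_{\mathbf z}(x,\mathbf z)=B(x)\mathbf z+O(\mathbf z^2)$ it only remains to prove that the linear part $B(x)$ is a constant invertible matrix.

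To see this, denote by $A$ and $A'$ the constant matrices giving the $\mathbf z$-linear parts of $X$ and $Y$. Comparing the terms linear in $\mathbf z$ in the relation expressing that $\Psi$ conjugates $X$ to $Y$ yields the matrix ODE $xB'(x)=A'B(x)-B(x)A$. Since $\Psi$ is bianalytic on a neighborhood of the annulus, $B$ is holomorphic and single-valued there, so it admits a Laurent expansion $B(x)=\sum_{k\in\mathbb Z}B_kx^{k}$. Substituting into the ODE gives $A'B_k-B_kA=kB_k$ for every $k\in\mathbb Z$; a nonzero $B_k$ forces $k$ to be the difference of an eigenvalue of $A'$ and an eigenvalue of $A$. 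By hypothesis \ref{conda} these eigenvalues coincide and equal $\mu_1,\dots,\mu_n$, so any such $k$ equals $\mu_i-\mu_j$ for some $i,j$. Applying the definition of no transverse negative resonance to the indices $e_i-e_j,\,e_j-e_i\in\mathcal L_{n,0}$ shows $\mu_i-\mu_j\notin\mathbb Z\setminus\{0\}$, whence $B_k=0$ for all $k\neq0$ and $B=B_0$ is constant; its invertibility follows from that of $\Psi$. Consequently $\Psi$ has the form required in the definition of $D_{r,R}$-transverse equivalence, with invertible constant linear part $B_0=(a_{ij})$ and remainder $\psi_1,\dots,\psi_n\in\mathfrak m^2$.

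The main obstacle I anticipate is the well-definedness of the spread-out map around the loop: one must check that transporting $\phi$ once around the annulus returns the same germ, and this is exactly controlled by the conjugation relation $\phi\circ h_X=h_Y\circ\phi$ of hypothesis \ref{condc}; keeping careful track of the (possibly shrinking) domains $U,V$ and of the $x$-levels under the complex-time flows is the delicate bookkeeping. The remaining step, that no transverse negative resonance forces $B$ to be constant, is then a short computation with the $\mathrm{ad}$-eigenvalue equation above.
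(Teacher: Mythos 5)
Your proof is correct, and while the first stage coincides with the paper's, the key step is handled by a genuinely different argument. For the construction itself you and the paper do the same thing: the paper builds $\Psi=f^{\alpha_x^{-1}}_{\mathcal G}\circ f^{\beta_x^{-1}}_{\mathcal G}\circ\varphi\circ f^{\beta_x}_{\mathcal F}\circ f^{\alpha_x}_{\mathcal F}$ by lifting a radial path $\alpha_x$ and a circular path $\beta_x$, which is exactly your flow formula once you include the radial factor (your displayed $\Psi_\theta$ literally covers only the circle $|x|=1$; to sweep the annulus you must flow for complex time $t$ with $e^{t}=x$, i.e.\ compose with $\Phi^{\mathcal F}_{-\log|x|}$ as well --- you flag this bookkeeping, and it is the same single-valuedness check via $\phi\circ h_X=h_Y\circ\phi$ in both treatments). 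The divergence is in proving $\Psi$ is $x$-normalized, which the paper calls the new content of the proposition. The paper uses hypothesis \ref{conda} to take $X\equiv Y\bmod\mathfrak m^2$, asserts that the holonomy conjugacy $\varphi$ can be taken tangent to the identity, and then computes all the lifts modulo $\mathfrak m^2$ (they act by $\exp(\sigma A)$ and $\exp(-\sigma A)$, which cancel), concluding $\Psi=(x,\mathbf z)\bmod\mathfrak m^2$. You instead extract the Sylvester-type ODE $xB'(x)=A'B(x)-B(x)A$ for the $\mathbf z$-linear part, expand $B$ in a Laurent series on the annulus, and kill every nonzero mode via $A'B_k-B_kA=kB_k$, since a nonzero $B_k$ would force $k=\mu_i-\mu_j\in\mathbb Z\setminus\{0\}$, which hypothesis \ref{condb} excludes (applying the resonance condition to $e_i-e_j,\,e_j-e_i\in\mathcal L_{n,0}$, exactly as the paper itself observes in the proof of Lemma \ref{lemma normal form}). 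Each route buys something: the paper's argument keeps hypothesis \ref{condb} entirely in reserve for the extension step (Theorem \ref{theo1}) and produces $\Psi$ tangent to the identity, but it leans on the normalization of $\varphi$ to be tangent to the identity, a point it does not fully justify (a priori the conjugacy of the holonomies only has a linear part commuting with $e^{-2\pi iA}$); your argument consumes \ref{condb} already at this stage, but it needs no normalization of $\phi$ whatsoever and \emph{derives}, rather than arranges, the constancy of the linear part $B_0$. Since the definition of $D_{r,R}$-transverse equivalence only requires a constant invertible matrix $(a_{ij})$ plus a remainder in $\mathfrak m^2$, your conclusion feeds into the proof of the Main Theorem without any change.
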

	
	\begin{proof}
		Let $K\subset\Gamma^*$ be a compact set containing the unity circle $\mathbb S^1$. Given a point $(x,\mathbf{z})$ in a convenient neighborhood of $K$, let $\alpha_{x} :[0,-ln|x|]\to K$ and $\beta_{x}:[0,\Tilde{t}]\to K$ be the curves such that $\alpha_{x}(t)=(xe^t,\mathbf{0})$, $\beta_x(t)=\left(x/|x|e^{-2\pi it},\mathbf{0}\right)$, and $\beta_x(\Tilde{t})=(1,\mathbf{0})$. 
		
		\begin{figure}[ht]
			\centering
			\includegraphics[scale=0.75]{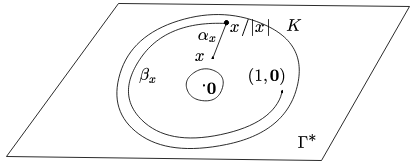} \caption{Composed path}
		\end{figure}
		
		Denote by $f^{\gamma}_{\mathcal F}$ and $f^{\gamma}_{\mathcal G}$ the respective lifts of a curve $\gamma\subset K$ to the leaves of $(\mathcal F,H,\Gamma)$ and $(\mathcal G,L,\Omega)$, and by $\varphi$ the map that conjugates the $\Gamma$-holonomy to the $\Omega$-holonomy. From Theorem~1.2~\cite{camacho1977introduccao}, we recall that the map that conjugates the local generators $X$ and $Y$ is given by $$\Psi(x,\mathbf{z}):=f^{\alpha^{-1}_x}_{\mathcal G}\circ f^{\beta^{-1}_x}_{\mathcal G}\circ \varphi\circ f^{\beta_{x}}_{\mathcal F}\circ f^{\alpha_x}_{\mathcal F}(x,\mathbf{z}).$$
		
		By the hypothesis \ref{conda}, we can assume that $X\mod \mathfrak m^2=Y\mod \mathfrak m^2=x\partial_x+\sum_{i=1}^{n}\sum_{j=1}^{n}a_{ij}z_j\partial_{z_i}$, where $A=(a_{ij})_{n\times n}$ is a constant matrix. As consequence, the holonomies have the same linear part. Hence, the map $\varphi$ has the form $(x,\mathbf{z})\mod \mathfrak m^2$, and as $\Gamma:=\{\mathbf{z}=0\}$ and $\Omega:=\{\mathbf{w}=0\}$, we have $\varphi|_{\{\mathbf{z}=0\}}=\id$.
		
		The restrictions of $X$ to the curves $\alpha_x$ and $\beta_{x}$ are equivalent to the equations below, where $\gamma_1$ and $\gamma_2$ are the respective solutions
		$$\begin{cases}
			\frac{\partial \mathbf{z}}{\partial t}=A\mathbf{z}\mod \mathfrak m^2\\\mathbf{z}(0)=\mathbf{z}\\\gamma_1(t)=\exp({tA})\mathbf{z}\mod \mathfrak m^2
		\end{cases},\begin{cases}
			\frac{\partial \mathbf{z}}{\partial t}=-2\pi iA\mathbf{z}\mod \mathfrak m^2\\\mathbf{z}(0)=\mathbf{z}\\\gamma_2(t)=\exp({-2\pi itA})\mathbf{z}\mod\mathfrak m^2
		\end{cases}.$$
		Then, there exists a constant $\sigma\in\mathbb C$ such that, the compositions of the lifts are
		\begin{align*}
			f^{\beta_x}_{\mathcal F}\circ f^{\alpha_x}_{\mathcal F}(x,\mathbf{z})&=(1,\exp({\sigma A})\mathbf{z})\mod \mathfrak m^2, \\
			f^{\alpha^{-1}_x}_{\mathcal G}\circ f^{\beta^{-1}_{x}}_{\mathcal G}(1,\mathbf{z})&=(x,\exp({-\sigma A})\mathbf{z})\mod \mathfrak m^2.
		\end{align*}
		We conclude that $\Psi(x,\mathbf{z})=(x,\mathbf{z})\mod\mathfrak m^2$ and $\Psi(x,0)=(x,0)$, in other words, the map $\Psi$ is an $x$-normalized automorphism.
	\end{proof}
	\begin{remark}
		Looking at the structure of the logarithm of invertible linear maps, one sees that the condition \ref{conda} of the Main Theorem can be replaced by the following non-equivalent condition: Writing the respective semi-simple parts of $X$ and $Y$ as $x\partial_x+L(\mu)$ and $x\partial_x+L(\lambda)$, no difference $\mu_i-\lambda_j$, $1\leq i,j\leq n$ is a nonzero integer.
	\end{remark}
	We can reformulate the Main Theorem as follows: \textit{If the eigenvalues of $(\mathcal F,H,\Gamma)$ satisfy the no transverse negative resonance condition, then $(\mathcal F,H,\Gamma)$ is analytically classified by its linear part and its $\Gamma$-holonomy}. Now, we give a proof of it. 
	\begin{proof}[of the Main Theorem]
		Let $\Psi\in\mathcal A_{norm}(\mathcal O_{r,R}\{\mathbf{z}\})$ be the automorphism defined by Proposition \ref{31} which conjugates $X$ to $Y$, the respective local generators, in a neighborhood of an annulus $D_{r,R}$.
		
		By Proposition \ref{12}, there exist $\Phi_1,\Phi_2\in\mathcal{A}_{norm}(\mathcal O_R[[\mathbf{z}]])$ which diagonalize the respective semisimple parts of $X$ and $Y$, in other words $$\Phi_1^*X=X_s+X_n,$$ $$\Phi_2^*Y=Y_s+Y_n,$$
		where $X_s=Y_s=x\partial_x+L(\mu)$. By the uniqueness of the Jordan decomposition, the automorphism $\Psi_0=\Phi_2\circ\Psi\circ\Phi_1^{-1}\in \mathcal{A}_{norm}(\mathcal O_{r,R}[[\mathbf{z}]])$ is such that $$\Psi_0^*(x\partial_x+L(\mu))=x\partial_x+L(\mu).$$
		It means that $\Psi_0$ is an $x$-normalized $D_{r,R}$-transversely formal symmetry for $x\partial_x+L(\mu)$.
		
		Let $A\circ \exp{W}$ be the exponential decomposition of $\Psi_0$ (see subsection \ref{decom}). Applying Lemma \ref{19}, the $\exp{W}$ is a symmetry for $x\partial_x+L(\mu)$, and by Proposition \ref{20}, we must have $$[W,x\partial_x+L(\mu)]=0.$$
		Since $x\partial_x+L(\mu)$ has no transverse negative resonance, we can apply Theorem \ref{theo1} to guarantee that $W\in \mathcal D(\mathcal O_{R}[[\mathbf{z}]])$, and then, $\exp W\in \mathcal A(\mathcal O_{R}[[\mathbf{z}]])$.
		
		Finally, we have that $\Psi_0=A\circ \exp W\in\mathcal A( \mathcal O_R[[\mathbf{z}]])$. As consequence, the automorphism $\Psi$ lies in the intersection $\mathcal A( \mathcal O_R[[\mathbf{z}]])\cap \mathcal A( \mathcal O_{r,R}\{\mathbf{z}\})$. Applying Lemma \ref{22} to the components $\Psi_1,\dots,\Psi_{n+1}$ of $\Psi$, we conclude that they lie in $\mathcal O_R\{\mathbf{z}\}$. Therefore, $\Psi\in \mathcal A_{norm}( \mathcal O_R\{\mathbf{z}\})$. \end{proof}
	
	In dimension three, we can be more precise about the local classification in terms of the linear part and the holonomy proving Corollary \ref{corollary2}.
	
	\begin{proof}[of Corollary \ref{corollary2}] 
		We recall that the triple $(1,\lambda,\mu)$ is in the Siegel (resp. Poincaré) domain if $0\in\mathbb C$ belong (resp. does not belong) to the the convex hull of $1$, $\lambda$, and $\mu$.
		
		If $\lambda\in\mathbb C\setminus\mathbb R$, up to a symmetry, we can assume that $\I(\lambda)>0$. Then, the triple $(1,\lambda,\mu)$ is in the Siegel domain if and only if the third eigenvalue $\mu$ lies in the closed region $\mathcal S:=\{z\in\mathbb C;\pi\leq\arg z\leq\pi+\arg\lambda\}$ (see figure \ref{Siegeldomain}).
		\begin{figure}[ht]
			\centering
			\includegraphics[scale=.65]{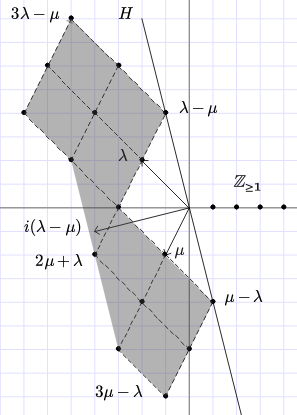}
			\caption{Siegel domain}
			\label{Siegeldomain}
		\end{figure}
		
		Assume that $\mu$ lies in $\mathcal S$. Then, two cases can occur:
		
		\begin{enumerate}[label=\roman*.]
			\item \label{corolarioitem1}$\lambda$ lies in $\mathbb C\setminus \mathbb R$, and consequently the pair $(\lambda,\mu)$ satisfies the no negative resonance condition.
			We need to prove that the discrete positive cone
			$$\mathcal C = \{p_1\lambda +p_2\mu; p_1,p_2\in\mathbb Z_{\geq-1}, p_1 +p_2 \geq 0\}$$
			contains no element $n\in\mathbb Z_{\geq1}$. We consider initially the case where $\mu = 0$. Then $\mathcal C =\{p_1\lambda; p_1\in\mathbb Z_{\geq-1}\}$ and the equality $\I(p_1\lambda) =
			\I(n)$ implies that $p_1 =0$. Consequently, we have that $0=\R(p_1\lambda)=n$. Hence $\mathcal C\cap\mathbb Z_{\geq1}=\varnothing$. 
			
			Now, assume that $\mu$ is nonzero. Since, $ \mu\in\mathcal S$, the imaginary part of $\mu$ is negative. Remark that the euclidean inner product of the vectors in $\mathbb R^2$ associated to two complex numbers $z, w$ is given by $\R(z \overline{w})$, and $iz$ and $-iz$ are orthogonal to $z$. Hence, the set $\mathcal C$ lies in the half-plane $$H=\{z;\R(z\overline{i(\lambda-\mu)})\geq 0\}$$
			\begin{figure}[ht]
				\centering
				\includegraphics[scale=.8]{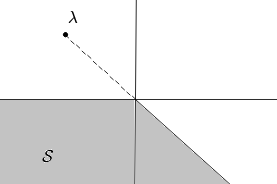}
				\caption{The discrete positive cone $\mathcal C$}
			\end{figure}
			
			Now, for an arbitrary positive integer number $n$, $$\R(n\overline{i(\lambda-\mu}))=n\R(\overline{i(\lambda-\mu)})=-n\I(\lambda-\mu)<0$$
			which shows that $n\notin H$.
			\item$\lambda$ lies in $\mathbb R$, then the region $\mathcal S$ reduces to either to the half nonpositive real line (if $\lambda>0$) or to the whole complex plane (if $\lambda \leq 0$). We consider these two cases separately. But firstly, remark that the no transverse negative resonance property is equivalent to the conjunction of the following three conditions,
			\begin{center}
				\begin{tabular}{c c r}
					$\forall p_1,p_2\in\mathbb Z_{\geq0},\forall q\in\mathbb Z_{\geq1}$;&$ p_1\lambda+p_2\mu\ne q$&$(\star)$\\
					$\forall p,q\in\mathbb Z_{\geq1};$& $p\lambda\ne\mu+ q$&$(\star\star)$\\
					$\forall p,q\in\mathbb Z_{\geq1};$& $p\mu\ne\lambda+ q$&$(\star\star\star)$
				\end{tabular}  
			\end{center}
			
			\begin{enumerate}
				\item\label{item2.1} $\mu\leq0<\lambda$. Here, $p\mu\leq0$ and $\lambda+q>0$, then the condition $(\star\star\star)$ always holds. The negation of the condition $(\star\star)$ corresponds to the equation (\ref{corolario2.0}), and the negation of the condition $(\star)$ is equivalent, up to a permutation of coordinates, to $p_1\geq1$ and $\lambda=q/p_1-p_2/p_1\mu\in\mathbb Q_{>0}-\mu\mathbb Q_{\geq0}$.
				
				\item $\lambda\leq 0$. The case where $\mu\ne\mathbb R$ is treated in (\ref{corolarioitem1}). By a changing coordinates, the case where $\mu>0$ is the item (\ref{item2.1}). Hence, up to a permutation of coordinates, just $\mu\leq\lambda\leq 0$ remains to be considered. In this case, the conditions $(\star)$ and $(\star\star\star)$ always hold. The negation of the condition $(\star\star)$ implies that $\mu<\lambda\leq 0$ and that (\ref{corolario2.0}) holds.
			\end{enumerate}
		\end{enumerate}
	\end{proof}
	\bibliographystyle{alpha}
	\bibliography{bibliography.bib} 
\end{document}